\let\wfs@comment@comment\comment
\let\comment\@undefined
\let\wfs@changes@comment\comment
\let\comment\@undefined
\newcommand\comment{%
	\ifthenelse{\equal{\@currenvir}{comment}}
	{\wfs@comment@comment}
	{\wfs@changes@comment}%
}
\newtheorem{theorem}{Theorem}[section]
\newtheorem{corollary}[theorem]{Corollary}
\newtheorem{proposition}[theorem]{Proposition}
\newcommand{\cC}{{\mathcal C}}
\newcommand{\cH}{{\mathcal H}}
\newcommand{\F}{{\mathbb F}}
\newcommand{\fq}{{\mathbb F}_{q}}
\title{Minimal codewords in Norm-Trace codes}
\author{Daniele Bartoli\thanks{Dipartimento di Matematica e Informatica, Universit\`a degli Studi di Perugia,  Perugia, Italy. daniele.bartoli@unipg.it}, Matteo Bonini \thanks{Aalborg University, Department of Mathematical Sciences, Aalborg, Denmark. mabo@math.aau.dk} and 
	Marco Timpanella\thanks{Dipartimento di Matematica e Informatica, Universit\`a degli Studi di Perugia,  Perugia, Italy.
		marco.timpanella@unipg.it}
}
\date{ }
\begin{document}
	
	\maketitle
	\begin{abstract}
		In this paper, we consider the affine variety codes obtained evaluating the polynomials  $by=a_kx^k+\dots+a_1x+a_0$, $b,a_i\in\mathbb{F}_{q^r}$, at the affine $\F_{q^r}$-rational points of the Norm-Trace curve. In particular, we investigate the weight distribution and the set of minimal codewords. Our approach, which uses tools of algebraic geometry, is based on the study of the absolutely irreducibility of certain algebraic varieties.
	\end{abstract}
	
	\vspace{0.5cm}\noindent {\bf Keywords}:
	Norm-trace curve; Minimal codewords; Affine variety codes; Weight spectrum
	
	\vspace{0.2cm}\noindent{\bf MSC codes}:
	14G50 - 11T71 - 94B27
	\vspace{0.2cm}\noindent

	\section{Introduction}\label{Sec:Intro}
	
	Affine variety codes \cite{MR1600184} are linear codes obtained evaluating multivariate polynomials at the $\F_q$-rational
	points of a certain affine variety. Since any linear code can be described as an affine variety code (see \cite[Prop 1.4]{MR1600184}), such codes constitute the entire class of linear codes. Even though it is easy to determine the length  and the dimension of an affine variety code, a more difficult task is to provide estimates on the minimum distance, or, more in general, on the weight distribution of the code. Still, computing the planar intersections of the chosen variety with some low-degrees ones is often useful in obtaining information on the weight spectrum and the weight distribution of affine-variety codes, see for example \cite{bartoli2019minimum, couvreur2012dual, marcolla2016small,MR4115116, geil2003codes}.
	
	Given any linear code $C$, another challenging task is the determination of the set of its minimal codewords. For a codeword $c\in C$, the \textit{support} of $c$, denoted by $\mathrm{Supp(c)}$, is the set of its nonzero coordinate positions, and the \textit{weight} of $c$ is ${\rm wt}(c)=\#\mathrm{Supp(c)}$.
	If the support of a codeword $c$  contains the support of another codeword $c^\prime$, then we will say that $c$ \textit{covers} $c^\prime$.
	A codeword $c$ is said to be \textit{minimal} if it covers only the proportional codewords, i.e. if $c^\prime\in C$ is linear independent with $c$, then  $\mathrm{Supp(c^\prime)}\not\subseteq\mathrm{Supp(c)}$. Minimal codewords were employed by Massey  \cite{Massey1999MinimalCA} for the construction of a secret sharing scheme.  For this reason, in recent years,  several papers have been dedicated to the determination of the minimal codewords of a linear code \cite{bartoli2021small,bartoli2021inductive, bonini2021minimal, alfarano2022three,heger2021short,bartoli2022cutting, maji2021one, alfarano2022linear, santonastaso2022subspace, bartoli2021weight}.
	
	In this paper, we give information on the weight distribution and on the minimal codewords of affine variety codes arising from the Norm-Trace curve, as already investigated in literature; see \cite{bonini2020intersections,bonini2022rational,ballico2013duals}. More in detail, throughout the paper we consider the affine variety code $C_{q,r,k}$ obtained evaluating the polynomials  \begin{equation}\label{Eq:curveRaz}
		b y=a_kx^k+\dots+a_1x+a_0,
	\end{equation}
	where $b,a_i\in \mathbb{F}_{q^r}$, at the affine $\mathbb{F}_{q^r}$-rational points of the Norm-Trace curve $\mathcal{N}_{q,r}$, that is the plane curve defined by the affine equation
	\begin{equation}\label{Eq:normtrace}
		x^{\frac{q^r-1}{q-1}}=y^{q^{r-1}}+y^{q^{r-2}}+\ldots+y^q+y.
	\end{equation}
	Note that, up to rescaling, we can assume that the polynomials as in \eqref{Eq:curveRaz} are either of type
	\begin{equation}\label{Eq:curveRaz1}
		y=a_kx^k+\dots+a_1x+a_0,
	\end{equation}
	or
	\begin{equation*}
		a_kx^k+\dots+a_1x+a_0=0.
	\end{equation*}
	In order to obtain information on the weight distribution of the code $C_{q,r,k}$, we deal with the possible intersection patterns of the curve $\mathcal{N}_{q,r}$ and the curves with affine equation \eqref{Eq:curveRaz1}. To do this, our approach is based on the investigation of the absolutely irreducibility of a certain algebraic variety, and therefore it relies on tools of algebraic geometry; see Section \ref{Sec:Preliminaries} for the details. 
	In the last decades, such tools have proved successful in the construction and investigation of many classes of linear codes; see for instance \cite{MR4115116,LT2021,MTZ,MR4149378,MR3775426,TZ}.

	The paper is organized as follows. The prerequisites on  Norm-Trace curves, affine variety codes, and the description of our approach, are give in Section \ref{Sec:Preliminaries}. Section \ref{Sec:Irriducibilità} deals with the absolutely irreducibility of an algebraic variety attached to the problem, and these results are then applied in Section \ref{Sec:Weightdistribution}
	to investigate the weight distribution of the code $C_{q,r,k}$. Finally, in Section \ref{Sec:minimali}, we determine the set of minimal codewords of $C_{q,r,k}$.

	\section{Preliminaries}\label{Sec:Preliminaries}
	In this section, we introduce the notation and terminology that we will use throughout the paper.
	Hereafter, $p$ is a prime and $q=p^m$, where $m$ is a positive integer. Also, $\mathbb{F}_q$ denotes the finite field with $q$ elements. With the symbol $\mathbb{A}^r(\mathbb{F}_q)$ (resp. $\mathbb{P}^r(\mathbb{F}_q)$) we denote the affine (resp. projective) $r$-dimensional space over $\mathbb{F}_q$.
	
	The norm function $\text{N}_{\F_q}^{\F_{q^r}}$ and the trace function $\text{T}_{\F_q}^{\F_{q^r}}$ are the functions from $\F_{q^r}$ to $\F_q$ defined by
	\[
	\text{N}_{\F_q}^{\F_{q^r}}(x)=x^{\frac{q^r-1}{q-1}}=x^{q^{r-1}+q^{r-2}+\dots+q+1}
	\]
	and
	\[
	\text{T}_{\F_q}^{\F_{q^r}}(x)=x^{q^{r-1}}+x^{q^{r-2}}+\dots+x^q+x,
	\]
	respectively.
	When $q$ and $r$ are understood, we will write $\text{N}=\text{N}_{\F_q}^{\F_{q^r}}$ and $\text{T}=\text{T}_{\F_q}^{\F_{q^r}}$.
	
	\subsection{Affine variety codes}
	We introduce now  affine variety codes, see \cite{MR1600184} for further information.
	
	Let $t\ge1$ and consider an ideal $I=\langle g_1,\dots,g_s\rangle$ of $\F_q[x_1,\dots,x_t]$,  $\{x_1^q-x_1,\dots,\, x_t^q-x_t\}\subset I$. 
	The ideal $I$ is zero-dimensional and radical. Let $V(I)=\{P_1,\dots, P_n\}$ be the variety of $I$ and $R=\F_q[x_1,\dots,x_t]/I$.
	
	An affine variety code $C(I,L)$ is the image $\phi(L)$ of $L\subseteq R$, a $\mathbb{F}_q$-vector subspace of $R$ of dimension $r$, given by the isomorphism of $\F_q$-vector spaces $\phi:R\longrightarrow \F_q^n$ that evaluates an element $f\in R$ on $\{P_1,\dots,P_n\}$, i.e. $\phi(f)=(f(P_1),\dots,f(P_n))$.
	
	\subsection{Norm-trace curve}
	
	The Norm-Trace curve $\mathcal{N}_{q,r}$ is the plane curve defined by the affine equation
	\begin{equation*}
		\text{N}_{\F_q}^{\F_{q^r}}(x)=\text{T}_{\F_q}^{\F_{q^r}}(y).
	\end{equation*}
	The equation $	\text{N}_{\F_q}^{\F_{q^r}}(x)=\text{T}_{\F_q}^{\F_{q^r}}(y)$ has precisely $q^{2r-1}$ solutions in $\F_{q^r}^2$, so the curve $\mathcal{N}_{q,r}$ has $q^{2r-1}+1$ rational points: $q^{2r-1}$ of them correspond to affine points, plus a single point at infinity $P_{\infty}$. If $r=2$, $\mathcal{N}_{q,r}$ coincides with  the Hermitian curve, whereas $\mathcal{N}_{q,r}$ is singular in $P_{\infty}$ if $r\ge3$.
	
	Let $C_{q,r,k}$ be the affine variety code obtained evaluating the polynomials  \begin{equation*}
		by=a_kx^k+\dots+a_1x+a_0,
	\end{equation*}
	with $b$ and $a_i$ ranging in $\mathbb{F}_{q^r}$, at the $q^{2r-1}$ affine $\mathbb{F}_{q^r}$-rational points of  $\mathcal{N}_{q,r}$, and $k< q^{r-1}$. Then $C_{q,r,k}$ has length $q^{2r-1}$, dimension $k+1$, and the weight of a codeword  associated to the evaluation of a polynomial $by=f(x)$ as in \eqref{Eq:curveRaz} is given by 
	\[\mathrm{w}(\mathrm{ev}(f))=q^{2r-1}-|\mathcal{N}_{q,r}\cap \mathcal{X} \cap  \mathbb{A}^{2}(\F_{q^{r}})|,\]
	where $\mathcal{X}$ is the curve with affine equation $by-f(x)=0$.
	Therefore, in order to investigate the weight distribution of the code $C_{q,r,k}$, we must study the possible planar intersections in $\mathbb{A}^{2}(\F_{q^{r}})$ between  $\mathcal{N}_{q,r}$ and the (rational) curves whose affine equations are given by \eqref{Eq:curveRaz}.
	Here, by planar intersections (or simply intersections) of two curves lying in the affine space $\mathbb{A}^{2}(\F_{q^{r}})$, we mean the number of points in $\mathbb{A}^{2}(\F_{q^{r}})$ lying on both curves, disregarding multiplicity.

	For the remaining part of this section, we report the  approach used in \cite{bonini2020intersections,bonini2022rational} to deal with this problem.

	In the following we deal with the case $b\neq 0$ in \eqref{Eq:curveRaz}. Substituting $y=f(x)$ as in \eqref{Eq:curveRaz1} in the equation of $\mathcal{N}_{q,r}$, and exploiting the linearity of the trace function, we get
	
	\begin{equation}\label{normtrace:y=A(x)}
		\mathrm{N}(x)=\mathrm{T}(a_{k}x^k)+\dots+\mathrm{T}(a_{2}x^{2})+\mathrm{T}(a_{1}x)+\mathrm{T}(a_{0}).
	\end{equation}
	
	Now, fix a normal basis $\mathcal{B}=\{\alpha,\alpha^q,\dots,\alpha^{q^{r-1}}\}$ of $\F_{q^r}$ over $\F_{q}$ with a suitable $\alpha\in\F_{q^r}$ (see \cite{lidl1997finite} for the details), and let $\Phi_{\mathcal{B}}$ be the canonical vector space isomorphism defined by
	
	$$\Phi_{\mathcal{B}}:(\F_{q})^{r} \longrightarrow \F_{q^{r}}$$
	$$\Phi_{\mathcal{B}}((s_{1},\dots,s_{r}))=s_{1}\alpha+s_{2}\alpha^q+\dots+s_{r}\alpha^{q^{r-1}}.$$
	
	This isomorphism allows us to read the norm $\mathrm{N}$ and the trace $\mathrm{T}$ as maps from $(\F_{q})^{r}$ to $\F_{q}$, by taking $\widetilde{\text{N}}=\text{N}\circ\Phi_{\mathcal{B}}$ and $\widetilde{\text{T}}=\text{T}\circ\Phi_{\mathcal{B}}$.
	Let $\text{T}_i:=\text{\text{T}}(a_ix^i)$  and $\widetilde{\text{T}}_i:=\text{T}_i\circ\Phi_{\mathcal{B}}$, for $1\le i\le k$. Then it is readily seen that $\widetilde{\text{N}}$ and $\widetilde{\text{T}}_i$ are homogeneous polynomials of degree respectively $r$ and $i$ in $\F_q[x_1,\dots,x_{r}]$, $i=0,\dots,k$.
	
	Therefore, we can rewrite \eqref{normtrace:y=A(x)} as
	\begin{equation}
		\label{eq:sup}
		\widetilde{\text{N}}(x_1,\dots,x_{r})=\widetilde{\text{T}}_k(x_1,\dots,x_{r})+\dots+\widetilde{\text{T}}_1(x_1,\dots,x_{r})+\text{T}(a_0).
	\end{equation}
	Equation \eqref{eq:sup} is the equation of a  variety $\mathcal{S}$ defined over $\F_q$. Note that the RHS of \eqref{eq:sup} has degree $r$, and the LHS has degree $k$. By construction, the $\F_q$-rational points of $\mathcal{S}$, correspond to the planar intersections in $\mathbb{A}^2(\F_{q^r})$ between the Norm-Trace curve $\mathcal{N}_{q.r}$ and the rational curve of equation $y=f(x)$, see \cite[Remark 4.1]{bonini2020intersections}.
	
	Let $\mathcal{V}_{k,r}$ be the variety $\psi(\mathcal{S})$, where $\psi$ is the affine change of variables of $\mathbb{A}^r(\overline{\F}_q)$ defined by 
	$$\psi(x_1,\dots,x_{r})=M(x_1,\dots,x_{r})^t=(X_1,\dots,X_r)^t,$$ and $M$ is the non-singular matrix
	\[
	M=\begin{pmatrix}
		\alpha&\alpha^q&\dots&\alpha^{q^{r-1}}\\
		\alpha^q&\dots&\alpha^{q^{r-1}}&\alpha\\
		\vdots &\vdots&\vdots&\vdots \\
		\alpha^{q^{r-1}}&\alpha&\dots&\alpha^{q^{r-2}}
	\end{pmatrix}.
	\]

	Then, the variety $\mathcal{V}_{k,r}$ is defined over $\F_{q^r}$, and it has affine equation $V_{k,r}(X_1,\ldots,X_r)=0$, with
	\begin{equation}\label{Eq:Skr}
		V_{k,r}(X_1,\ldots,X_r)=-\prod_{i=1}^r X_i+\sum_{i=1}^r a_k^{q^{i-1}}X_i^k+\dots+\sum_{i=1}^r a_1^{q^{i-1}}X_i+\mathrm{T}(a_0)
	\end{equation}
	Note that $\psi$ an affine change of variables and thus  preserves the number of absolutely irreducible components of $\mathcal{S}$, and their degrees. This equivalence between $\mathcal{V}_{k,r}$ and $\mathcal{S}$ is crucial in our investigation and in the next sections we will make use a number of times of this link. 
	
	\section{Planar intersections of $\mathcal{N}_{q,r}$ and the curves $y-f(x)=0$}\label{Sec:Irriducibilità}
	
	As it was shown in Section \ref{Sec:Preliminaries}, finding the planar intersections of the norm-trace curve $\mathcal{N}_{q,r}$ and the curves of equation \eqref{Eq:curveRaz1} is equivalent to finding the number of $\fq$-rational points of the $\fq$-rational variety $\mathcal{S}$. Our aim is to prove that $\mathcal{S}$ is absolutely irreducible under certain assumptions on $k$ and $r$, by proving the absolutely irreducibility of $\mathcal{V}_{k,r}$ . Indeed, since  $\psi(x_1,\dots,x_{r})$ preserves the number of absolutely irreducible components of a variety, it follows that if $\mathcal{V}_{k,r}$ is absolutely irreducible the same holds for $\mathcal{S}$. Also, if $\mathcal{S}$ is absolutely irreducible we can apply the Lang-Weil bound to estimate the number of its $\F_q$-rational points.
	
	\begin{theorem}\cite[Lang-Weil bound]{MR65218}\label{Th:LW}
		Let $\mathcal{V}\subset \mathbb{P}^N(\mathbb{F}_q)$ be an absolutely irreducible variety of dimension $n$ and degree $d$. Then there exists a constant $C$ depending only on $N$, $n$, and $d$ such that 
		\begin{equation*}
			\left|\#(\mathcal{V}\cap \mathbb{P}^N(\mathbb{F}_q))-\sum_{i=0}^{n} q^i\right|\leq (d-1)(d-2)q^{n-1/2}+Cq^{n-1}.
		\end{equation*}
	\end{theorem}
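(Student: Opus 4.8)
This is a classical result (it is Lang--Weil's theorem, cited here), so rather than a new argument I would present the standard proof strategy: induction on the dimension $n$, with Weil's Riemann Hypothesis for curves as the base case and a fibration into hyperplane sections for the inductive step. Throughout, the point to keep in mind is that the $\sqrt{q}$ savings come entirely from the curve case, and everything in higher dimension is about propagating that savings while keeping all auxiliary errors of size $O(q^{n-1})$.

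For the base case $n=1$, let $\mathcal{V}$ be an absolutely irreducible curve of degree $d$ in $\mathbb{P}^N(\mathbb{F}_q)$ and pass to its normalization $\widetilde{\mathcal{V}}$, a smooth projective absolutely irreducible curve of some genus $g$ defined over $\mathbb{F}_q$. Weil's theorem gives $|\#\widetilde{\mathcal{V}}(\mathbb{F}_q)-(q+1)|\le 2g\sqrt{q}$. Since $\mathcal{V}$ admits a plane model of degree $d$, the genus--degree inequality yields $2g\le (d-1)(d-2)$, producing exactly the main term $(d-1)(d-2)\sqrt{q}$. Finally, the normalization map is an isomorphism away from the finitely many singular points, whose number is bounded in terms of $d$ and $N$ alone, so $|\#\mathcal{V}(\mathbb{F}_q)-\#\widetilde{\mathcal{V}}(\mathbb{F}_q)|$ is absorbed into the constant term $Cq^{0}=C$. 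This settles $n=1$.

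For the inductive step, assume the estimate for all dimensions smaller than $n\ge 2$. First I would reduce to a hypersurface $\mathcal{V}\subset\mathbb{P}^{n+1}(\mathbb{F}_q)$ of degree $d$ by a generic linear projection that is birational onto its image and preserves dimension and degree; its fibres are finite of bounded cardinality, so the discrepancy in point counts is of lower order. Then I would fibre $\mathcal{V}$ by a generic pencil of hyperplane sections $\{\mathcal{V}\cap H_t\}_{t\in\mathbb{P}^1}$. By a Bertini-type theorem, all but boundedly many members are absolutely irreducible of dimension $n-1$ and degree $d$. Writing
\[
\#\mathcal{V}(\mathbb{F}_q)=\sum_{t\in\mathbb{P}^1(\mathbb{F}_q)}\#\big(\mathcal{V}\cap H_t\big)(\mathbb{F}_q)-q\,\#B(\mathbb{F}_q),
\]
where $B=\mathcal{V}\cap H_0\cap H_\infty$ is the base locus (contained in every hyperplane of the pencil), I would apply the induction hypothesis to each good section, which contributes $\sum_{i=0}^{n-1}q^i+O\big((d-1)(d-2)q^{n-3/2}\big)$, sum over the $q+1$ values of $t$, bound the boundedly many bad sections crudely by $O(q^{n-1})$, and subtract the base-locus overcount, which is $O(q^{n-1})$ since $\dim B=n-2$. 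Collecting the main terms reassembles $\sum_{i=0}^n q^i$ together with the term $(d-1)(d-2)q^{n-1/2}$, while all remaining errors are $O(q^{n-1})$.

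The main obstacle is the quantitative, uniform control needed in the inductive step: one needs a version of Bertini's theorem guaranteeing that the number of hyperplanes in the chosen pencil yielding reducible, non-reduced, or lower-degree sections is bounded by a constant depending only on $N$, $n$, and $d$ and independent of $q$, and one must track the contributions of the base locus and of these finitely many bad fibres so that they aggregate precisely into the secondary term $Cq^{n-1}$ without corrupting the main term. A further delicate point is the bookkeeping between irreducibility over $\mathbb{F}_q$ and absolute irreducibility over $\overline{\mathbb{F}}_q$, since only the latter is preserved by the fibration and is what controls the Frobenius action responsible for the $\sqrt{q}$ savings.
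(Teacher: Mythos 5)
The paper does not prove this statement at all---it is quoted verbatim from Lang--Weil's 1954 paper as a black box---so there is no internal proof to compare against; your sketch is a faithful outline of the classical argument in the cited source itself (induction on dimension, base case via Weil's bound for curves with the genus--degree inequality $2g\le(d-1)(d-2)$ applied to a plane model, inductive step via pencils of hyperplane sections with a quantitative Bertini theorem controlling the bad fibres and the base-locus overcount). The obstacles you flag, in particular the $q$-independent bound on the number of non-absolutely-irreducible sections and the handling of small $q$ by enlarging $C$, are exactly the technical lemmas that occupy the original proof, so your proposal is correct and takes essentially the same route as the cited reference.
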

	
	Although the constant $C$ was not  computed in \cite{MR65218}, explicit estimates have been provided for instance in  \cite{MR2206396,MR1988974,MR1962145,lidl1997finite,MR2121285,MR0429903} and they have the general shape $C=r(d)$ provided that $q>s(n,d)$, where $r$ and $s$ are polynomials of (usually) small degree. We refer to \cite{MR2206396} for a survey on these bounds. We only include the following result due to Cafure and Matera. 
	\begin{theorem}\cite[Theorem 7.1]{MR2206396}\label{Th:CafureMatera}
		Let $\mathcal{V}\subset\mathbb{A}^N(\mathbb{F}_q)$ be an absolutely irreducible variety defined over $\mathbb{F}_q$ of dimension $n$ and degree $d$. If $q>2(n+1)d^2$, then the following estimate holds:
		$$|\#(\mathcal{V}\cap \mathbb{A}^N(\mathbb{F}_q))-q^n|\leq (d-1)(d-2)q^{n-1/2}+5d^{13/3} q^{n-1}.$$
	\end{theorem}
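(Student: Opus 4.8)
The plan is to establish this effective form of the Lang--Weil bound by induction on the dimension $n$ of $\mathcal{V}$, with absolutely irreducible curves serving as the base case. Throughout I would rely on two elementary tools: B\'ezout's theorem, to control degrees under hyperplane sections and projections, and the \emph{primitive} point bound, which states that an affine variety of dimension $m$ and degree $\delta$ has at most $\delta\,q^{m}$ points over $\mathbb{F}_q$.

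For the base case $n=1$, here $\mathcal{V}$ is an absolutely irreducible affine curve of degree $d$. I would pass to its projective closure and its nonsingular model $\widetilde{\mathcal{V}}$, whose genus satisfies $g\le (d-1)(d-2)/2$ by the genus--degree inequality. The classical Weil bound for the smooth model then gives $|\#\widetilde{\mathcal{V}}(\mathbb{F}_q)-(q+1)|\le 2g\sqrt{q}\le (d-1)(d-2)\sqrt{q}$. Transferring this count back to $\mathcal{V}$ itself costs only the singular points and the at most $d$ points at infinity, each bounded by $O(d^2)$ and hence absorbed into the $q^{n-1}=q^{0}$ error term; this recovers the quantitative singular-curve estimate of Aubry--Perret.

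For the inductive step, assume the estimate in dimension $n-1$ and let $\mathcal{V}$ have dimension $n\ge 2$. I would fix a generic linear form $\ell$ and slice $\mathcal{V}$ by the pencil of parallel hyperplanes $H_\lambda=\{\ell=\lambda\}$, so that $\#(\mathcal{V}\cap\mathbb{A}^N(\mathbb{F}_q))=\sum_{\lambda\in\mathbb{F}_q}\#(\mathcal{V}_\lambda\cap\mathbb{A}^N(\mathbb{F}_q))$ with $\mathcal{V}_\lambda=\mathcal{V}\cap H_\lambda$. By an effective first Bertini irreducibility theorem, all but $O(d^2)$ of the values $\lambda$ are \emph{good}, meaning $\mathcal{V}_\lambda$ is absolutely irreducible of dimension $n-1$ and degree at most $d$; the number of \emph{bad} $\lambda$ is controlled through the degree of the relevant discriminant locus via B\'ezout. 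Applying the inductive hypothesis to each good fiber and the primitive bound (at most $d\,q^{n-1}$ points) to each bad fiber, the main terms sum to $q\cdot q^{n-1}=q^{n}$, the good-fiber errors sum to $q\bigl[(d-1)(d-2)q^{n-3/2}+5d^{13/3}q^{n-2}\bigr]=(d-1)(d-2)q^{n-1/2}+5d^{13/3}q^{n-1}$, and the $O(d^2)$ missing or bad fibers contribute $O(d^{3}q^{n-1})$, which is absorbed into the constant $5d^{13/3}$ once $q>2(n+1)d^2$.

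The main obstacle is precisely this inductive step. One needs an effective Bertini theorem that simultaneously guarantees absolute irreducibility of almost every section and furnishes an explicit bound on the number of degenerate sections; one then needs a delicate optimization so that the accumulated errors telescope into exactly the clean constants $(d-1)(d-2)$ and $5d^{13/3}$, rather than an unspecified $O(d^{13/3})$, all while keeping the threshold on $q$ as sharp as $q>2(n+1)d^2$. The careful genus and singularity bookkeeping in the base case, together with the uniform control of the locus of non-generic sections, is exactly where this hypothesis on $q$ is consumed.
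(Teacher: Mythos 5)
You should first be aware that the paper contains no proof of Theorem \ref{Th:CafureMatera}: it is quoted verbatim from Cafure and Matera \cite{MR2206396} and used as a black box. So the only meaningful comparison is with the proof in that reference, and at the level of architecture your sketch does track it: reduction to curves, the Weil bound for the smooth model plus singularity/infinity corrections in the style of Aubry--Perret for the base case, and a slicing induction governed by an effective Bertini theorem. The base case as you describe it is essentially fine.

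The genuine gap is in the inductive bookkeeping, and it is not a matter of ``delicate optimization'' that can be deferred --- as written, the arithmetic cannot close. If each of the up to $q$ good fibers is charged the full inductive error $(d-1)(d-2)q^{n-3/2}+5d^{13/3}q^{n-2}$, then summing over good fibers already consumes the \emph{entire} allowance $(d-1)(d-2)q^{n-1/2}+5d^{13/3}q^{n-1}$ of the statement in dimension $n$. The bad fibers then contribute an additional $B\,d\,q^{n-1}$ (with $B$ the number of bad slices), and a budget that is already saturated cannot ``absorb'' anything: run naively, your induction proves the bound with a constant growing linearly in $n$, roughly $c_n\approx c_1+(n-1)Bd$, not a uniform $5d^{13/3}$. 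Moreover, your causality is backwards: in the actual argument the exponent $13/3$ is not a pre-existing budget into which an $O(d^3 q^{n-1})$ nuisance term is folded; it \emph{originates} from the effective Bertini control of the bad locus (obtained there from Kaltofen's effective Noether irreducibility forms) multiplied by the trivial per-fiber bound. Closing the induction with an $n$-free constant then requires actively spending the hypothesis $q>2(n+1)d^2$: since $q^{1/2}>d\sqrt{2(n+1)}$, each per-level excess term of order $q^{n-1}$ can be traded for a fraction $1/\sqrt{2(n+1)}$ of a $q^{n-1/2}$-term, and it is the summation of these fractions over the $n$ levels that keeps the final constants clean. Your sketch names this as ``the main obstacle'' but supplies no mechanism, which is precisely the content of the theorem beyond classical Lang--Weil.

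A secondary omission: for $\mathcal{V}\subset\mathbb{A}^N(\mathbb{F}_q)$ of codimension greater than one, slicing by hyperplane pencils in $\mathbb{A}^N$ does not obviously preserve the data you need (sections of dimension exactly $n-1$ and degree at most $d$, with the pencil and its generic member controlled over $\mathbb{F}_q$ rather than over $\overline{\mathbb{F}}_q$). Cafure and Matera avoid this by first replacing $\mathcal{V}$ with a birationally equivalent hypersurface in $\mathbb{A}^{n+1}$ of the same degree, via a linear projection defined over $\mathbb{F}_q$ --- the existence of such a projection over a possibly small field, and the comparison of point counts between $\mathcal{V}$ and its image, both require proof --- and then run the slicing argument for hypersurfaces only. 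Your plan is silent on this reduction, and ``fix a generic linear form'' quietly assumes genericity is available over $\mathbb{F}_q$ itself.
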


	We report here some results that we will use to prove the irreducibilty of $\mathcal{V}_{k,r}$, under certain conditions on $k$ and $r$.
	As a corollary of \cite[Lemma 4.15]{Bartoli:2020aa4}, we have the following.
	\begin{proposition}\label{criterio2}
		Let $H$ be an hyperplane of $\mathbb{P}^{r}(\F_{q^r})$ such that $\mathcal{V}_{k,r}\cap H$ is non-repeated and absolutely irreducible. Then $\mathcal{V}_{k,r}$ is absolutely irreducible.
	\end{proposition}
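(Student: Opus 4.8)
The plan is to argue by contraposition, reproving the content underlying \cite[Lemma 4.15]{Bartoli:2020aa4}, of which this proposition is a corollary: I assume that $\mathcal{V}_{k,r}$ is \emph{not} absolutely irreducible and deduce that $\mathcal{V}_{k,r}\cap H$ cannot be simultaneously non-repeated and absolutely irreducible. Since $\mathcal{V}_{k,r}$ is a hypersurface, being cut out by the single equation $V_{k,r}=0$, I first pass to its projective closure in $\mathbb{P}^r$ and work with the homogenization of $V_{k,r}$; absolute irreducibility of the affine hypersurface is equivalent to that of its projective closure, so nothing is lost. The one structural fact I will exploit is that every irreducible component of a hypersurface has codimension one, which is automatic here since the components correspond to the irreducible factors of a single polynomial.

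First, I factor the homogeneous defining polynomial over $\overline{\F}_q$ as $\prod_j F_j^{e_j}$, with the $F_j$ pairwise distinct and irreducible. By definition $\mathcal{V}_{k,r}$ is absolutely irreducible exactly when there is a single factor occurring with exponent one; so, assuming the contrary, either there are at least two distinct factors, or a single factor occurs with $e_1\geq 2$. Next, I observe that the hypotheses force $H\not\subseteq \mathcal{V}_{k,r}$: were $H$ a component, the intersection $\mathcal{V}_{k,r}\cap H$ would be all of $H\cong\mathbb{P}^{r-1}$ rather than a proper hypersurface of it, contradicting that it is a non-repeated hypersurface of the expected degree. Consequently the linear form $\ell$ defining $H$ divides none of the $F_j$, and the restriction $F_j|_H$ of each irreducible factor is a \emph{nonconstant} form on $H$, since a positive-degree form retains its degree when restricted to a hyperplane it does not contain.

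Finally, I restrict and invoke unique factorization over $\overline{\F}_q$. The restriction of the defining polynomial to $H$ equals $\prod_j (F_j|_H)^{e_j}$, a product of nonconstant forms whose number of irreducible factors, counted with multiplicity, is at least $\sum_j e_j\geq 2$ in both of the two cases above. On the other hand, $\mathcal{V}_{k,r}\cap H$ being absolutely irreducible \emph{and} non-repeated means precisely that this restriction is, up to a scalar, a single irreducible polynomial to the first power. These two conclusions are incompatible, which yields the contradiction and proves the statement.

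The step I expect to be the most delicate is the second one, namely ruling out the degenerate behaviour of the restriction: one must ensure that no irreducible factor of $\mathcal{V}_{k,r}$ collapses to a constant on $H$ (equivalently, that $H$ is not a component) and must read the two hypotheses on the intersection correctly, interpreting \emph{absolutely irreducible} as irreducibility of the reduced intersection and \emph{non-repeated} as its reducedness. It is exactly the interplay of these two conditions that excludes, respectively, the ``several distinct components'' case and the ``repeated component'' case; dropping either one makes the statement fail, as a double hyperplane or a reducible surface meeting $H$ along a single common curve shows. Everything else is codimension bookkeeping for hypersurfaces combined with unique factorization.
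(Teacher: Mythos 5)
Your proof is correct. Note, however, that the paper itself gives no argument here: it states the proposition as a corollary of \cite[Lemma 4.15]{Bartoli:2020aa4} and simply cites that reference, whereas you reprove the underlying lemma from scratch in the special case needed (a hypersurface), via homogenization, unique factorization of the defining polynomial over $\overline{\F}_q$, and restriction to $H$. Your argument is in fact the standard one behind the cited lemma, and it is sound in all the delicate spots: passing to the projective closure is harmless for hypersurfaces since $f$ and its homogenization have matching irreducible factorizations; the degenerate case $H\subseteq \mathcal{V}_{k,r}$ is correctly excluded (if $\ell\mid F$ the restriction vanishes identically and the intersection is not a hypersurface of $H$ at all, so the hypotheses cannot hold --- and in the paper's application the restriction is the nonzero top-degree form, so this case never arises); and the factor count with multiplicity, $\sum_j e_j\geq 2$ in both failure modes, is exactly what clashes with ``$F|_H$ irreducible and squarefree'', including the subtle case where two distinct factors $F_j|_H$, $F_{j'}|_H$ share a component, which your multiplicity bookkeeping silently but correctly absorbs into a violation of non-repeatedness. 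What your route buys is a self-contained, elementary proof of the exact statement used; what the paper's citation buys is the stronger form of the lemma (guaranteeing an absolutely irreducible component \emph{defined over the base field} when only a component of $\mathcal{V}_{k,r}\cap H$ is assumed reduced and absolutely irreducible), of which the present proposition is the simplest instance. One cosmetic slip: in your factorization step the case ``a single factor with $e_1\geq 2$'' is not excluded by absolute irreducibility under the set-theoretic reading of that term, but including it only strengthens your contrapositive, so nothing breaks.
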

	The following result about the absolutely irreducibility of varieties of Fermat-type is well known and it is a direct consequence of their non-singularity. 
	\begin{proposition}\label{criterio3}
		Let $n,r$ be two positive integers such that $p\nmid n$ and $r\geq 3$. Then, the variety of $\mathbb{P}^{r-1}(\overline{\F}_{q})$ with homogeneous equation
		$$
		a_1X_1^n+a_2X_2^n+\ldots+a_rX_r^n=0,
		$$
		where $a_1,\ldots,a_r\in\overline{\F}_{q}$, 
		is absolutely irreducible.
	\end{proposition}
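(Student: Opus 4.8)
The plan is to establish absolute irreducibility by exhibiting the hypersurface as nonsingular and then deducing irreducibility of its defining form directly from that smoothness. Write $F(X_1,\dots,X_r)=a_1X_1^n+\dots+a_rX_r^n$ and let $\cF=V(F)\subset\mathbb{P}^{r-1}(\overline{\F}_q)$ be the associated hypersurface; I treat the genuinely Fermat case in which every $a_i$ is nonzero, as is implicit in the statement and as holds in the intended application (if some coefficients vanish, one is reduced to a cone over a Fermat hypersurface in fewer variables). Absolute irreducibility of the variety means that $F$ is, up to a scalar, an irreducible polynomial of $\overline{\F}_q[X_1,\dots,X_r]$, and this is exactly what I would prove.

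First I would check that $\cF$ is nonsingular. The partial derivatives are $\partial F/\partial X_j=n\,a_jX_j^{\,n-1}$ for $1\le j\le r$. Since $p\nmid n$, the integer $n$ is a nonzero element of $\overline{\F}_q$, and since each $a_j\neq 0$, any point at which all the partials vanish must satisfy $X_j^{\,n-1}=0$, hence $X_j=0$, for every $j$. As $(0,\dots,0)$ is not a point of $\mathbb{P}^{r-1}(\overline{\F}_q)$, the hypersurface $\cF$ has no singular points.

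To pass from smoothness to irreducibility, suppose for contradiction that $F=GH$ with $G,H\in\overline{\F}_q[X_1,\dots,X_r]$ homogeneous of positive degree (a proper power $F=cP^e$, $e\ge 2$, is handled identically by taking $G=P$ and $H=cP^{e-1}$). Then $V(G)$ and $V(H)$ are hypersurfaces of $\mathbb{P}^{r-1}(\overline{\F}_q)$, each of dimension $r-2$. Because $r\ge 3$, the ambient dimension is $r-1\ge 2$, and the projective dimension theorem gives $\dim\bigl(V(G)\cap V(H)\bigr)\ge (r-2)+(r-2)-(r-1)=r-3\ge 0$, so $V(G)\cap V(H)\neq\emptyset$. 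At any common zero $P$ of $G$ and $H$, the Leibniz identity $\nabla F=H\,\nabla G+G\,\nabla H$ forces $\nabla F(P)=0$; since moreover $P\in V(G)\cap V(H)\subseteq V(F)=\cF$, the point $P$ is a singular point of $\cF$, contradicting the previous step. Hence $F$ admits no nontrivial factorization, i.e. $\cF$ is absolutely irreducible.

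The only delicate point is guaranteeing that $V(G)$ and $V(H)$ actually meet: this is precisely where the hypothesis $r\ge 3$ enters, through the projective dimension theorem, and it is genuinely needed. Indeed, for $r=2$ the form $a_1X_1^n+a_2X_2^n$ splits into $n$ linear factors over $\overline{\F}_q$, so the conclusion fails and two hypersurfaces — now mere points of $\mathbb{P}^1$ — need not intersect. The smoothness computation, by contrast, is entirely routine once $p\nmid n$ is invoked.
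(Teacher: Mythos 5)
Your proposal is correct and follows exactly the route the paper indicates: the paper gives no proof, remarking only that the result ``is a direct consequence of their non-singularity,'' and your argument supplies precisely the standard details behind that remark (smoothness via $p\nmid n$, then the projective dimension theorem forcing any two factors to meet in a singular point, valid since $r\geq 3$). You also rightly flag the implicit hypothesis $a_i\neq 0$ for all $i$, which the paper's statement omits but which holds in all its applications, where the coefficients are the conjugates $a_k^{q^{i-1}}$ of a nonzero element.
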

	
	\begin{proposition}\label{prop1}
		Suppose that $k>r\geq 3$ and $p\nmid k$. Then $\mathcal{V}_{k,r}$ is absolutely irreducible.
	\end{proposition}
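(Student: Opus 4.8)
The plan is to apply Proposition~\ref{criterio2} with $H$ chosen to be the hyperplane at infinity of $\mathbb{P}^r(\F_{q^r})$, reducing the whole problem to recognizing the hyperplane section as a Fermat-type variety covered by Proposition~\ref{criterio3}. The driving observation is that the hypothesis $k>r$ makes the degree-$k$ part of $V_{k,r}$ strictly dominate the product term $-\prod_{i=1}^r X_i$ of degree $r$, so that passing to infinity isolates exactly the Fermat form $\sum_{i=1}^r a_k^{q^{i-1}}X_i^k$.

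Concretely, I would first pass to the projective closure of $\mathcal{V}_{k,r}$ in $\mathbb{P}^r(\overline{\F}_q)$ by homogenizing $V_{k,r}$ of \eqref{Eq:Skr} to degree $k$ with respect to a new coordinate $X_0$. Each term of degree $d<k$ then acquires a positive power $X_0^{k-d}$; in particular the monomial $-\prod_{i=1}^r X_i$ becomes $-X_0^{k-r}\prod_{i=1}^r X_i$ with $k-r\geq 1$, and the constant $\mathrm{T}(a_0)$ becomes $X_0^{k}\,\mathrm{T}(a_0)$. Restricting to the hyperplane $H\colon X_0=0$ annihilates every term carrying a factor of $X_0$, so the only surviving summand is the degree-$k$ form. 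Hence the section $\mathcal{V}_{k,r}\cap H$, regarded inside $H\cong\mathbb{P}^{r-1}(\overline{\F}_q)$, is cut out by
$$\sum_{i=1}^r a_k^{q^{i-1}}X_i^k=0.$$

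Next I would invoke the hypotheses to apply Proposition~\ref{criterio3} to this section. Since the polynomial in \eqref{Eq:curveRaz1} has degree exactly $k$, we have $a_k\neq 0$, and therefore every coefficient $a_k^{q^{i-1}}$ is nonzero; together with $p\nmid k$ and $r\geq 3$, this places the section exactly in the setting of Proposition~\ref{criterio3}, which yields its absolute irreducibility. Moreover, because $p\nmid k$, the partial derivatives $k\,a_k^{q^{i-1}}X_i^{k-1}$ cannot vanish simultaneously at a point of $\mathbb{P}^{r-1}$, so this Fermat hypersurface is smooth, hence reduced, and the section $\mathcal{V}_{k,r}\cap H$ is non-repeated. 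Both hypotheses of Proposition~\ref{criterio2} being satisfied, I conclude that $\mathcal{V}_{k,r}$ is absolutely irreducible.

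The argument is essentially immediate once the hyperplane at infinity is chosen, so I do not expect a genuine obstacle; the only points demanding care are the degree bookkeeping, namely checking that $k>r$ forces the degree-$k$ Fermat form to be the unique term surviving on $H$ (so that no lower-degree contribution, and in particular the product $\prod_{i=1}^r X_i$, pollutes the section), and making the standing assumption $a_k\neq 0$ explicit, since Proposition~\ref{criterio3} requires all coefficients of the Fermat form to be nonzero.
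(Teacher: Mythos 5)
Your proposal is correct and follows essentially the same route as the paper: restrict $\mathcal{V}_{k,r}$ to the hyperplane at infinity, observe that since $k>r$ the only surviving term is the Fermat form $\sum_{i=1}^r a_k^{q^{i-1}}X_i^k$, apply Proposition~\ref{criterio3} for its absolute irreducibility, and conclude via Proposition~\ref{criterio2}. You are in fact slightly more careful than the paper, making explicit both the non-repeatedness of the section (via smoothness from $p\nmid k$) and the standing assumption $a_k\neq 0$, which the paper leaves implicit.
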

	\begin{proof}
		It is readily seen that the homogeneous part in $V_{k,r}$ of the highest degree is 
		$$\sum_{i=1}^r a_k^{q^{i-1}}X_i^k,$$
		which is absolutely irreducible by Proposition \ref{criterio3}. Since $\sum_{i=1}^r a_k^{q^{i-1}}X_i^k=0$ is the intersection between $\mathcal{V}_{k,r}$ and the hyperplane at infinity, it follows that $\mathcal{V}_{k,r}$ is absolutely irreducible by Proposition \ref{criterio2}.
	\end{proof}

	\begin{proposition}
		Suppose that $k=r\geq 4$ and $p\nmid k$. Then $\mathcal{V}_{k,r}$ is absolutely irreducible.
	\end{proposition}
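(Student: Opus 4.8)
The plan is to argue exactly as in the proof of Proposition~\ref{prop1}, applying Proposition~\ref{criterio2} with $H$ the hyperplane at infinity, but to keep in mind that the leading form is no longer of pure Fermat type. Indeed, since now $k=r$, both the monomial $\prod_{i=1}^r X_i$ and the sum $\sum_{i=1}^r a_k^{q^{i-1}}X_i^k$ have degree $r$, so the homogeneous part of $V_{k,r}$ of highest degree is
\begin{equation*}
	G(X_1,\dots,X_r)=-\prod_{i=1}^r X_i+\sum_{i=1}^r a_r^{q^{i-1}}X_i^{\,r},
\end{equation*}
which is precisely the form cut out on the hyperplane at infinity, a hypersurface of $\mathbb{P}^{r-1}(\overline{\F}_q)$ of degree $r$ (here $a_r\neq 0$, since otherwise $f$ would not have degree $k=r$). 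The crux, and the only genuine difference with Proposition~\ref{prop1}, is therefore to prove that $G$ is absolutely irreducible: Proposition~\ref{criterio3} no longer applies directly, because the extra term $-\prod_i X_i$ destroys the Fermat (hence non-singular) shape.

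To handle this I would analyse the singular locus of $V(G)\subset\mathbb{P}^{r-1}(\overline{\F}_q)$ through the Jacobian criterion. Since $p\nmid k=r$, we have $r\neq 0$ in $\F_q$, and
\begin{equation*}
	\frac{\partial G}{\partial X_j}=r\,a_r^{q^{j-1}}X_j^{\,r-1}-\prod_{i\neq j}X_i,\qquad j=1,\dots,r.
\end{equation*}
At a singular point all these partials vanish; multiplying the $j$-th one by $X_j$ yields $r\,a_r^{q^{j-1}}X_j^{\,r}=\prod_{i=1}^r X_i$ for every $j$. First I would rule out vanishing coordinates: if some $X_\ell=0$, then each product $\prod_{i\neq j}X_i$ vanishes, which forces all $X_j=0$ and gives no projective point. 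Hence every $X_j\neq0$, and the equalities above give $a_r X_1^{\,r}=a_r^{q}X_2^{\,r}=\dots=a_r^{q^{r-1}}X_r^{\,r}$; each ratio $X_j/X_1$ can then take only finitely many values, so there are only finitely many candidate points. In other words, $\mathrm{Sing}(V(G))$ is a finite set.

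Finally I would combine this with the hypothesis $r\ge4$, so that the ambient space $\mathbb{P}^{r-1}$ has dimension $r-1\ge3$. If $G$ were \emph{not} absolutely irreducible, then either $G$ is divisible by the square of a non-constant form, or $G$ has two non-proportional irreducible factors. In the first case the whole reduced hypersurface, of dimension $r-2\ge2$, would be singular; in the second, the intersection of the two factors, of dimension at least $(r-1)-2=r-3\ge1$, would lie in the singular locus. Either way $\mathrm{Sing}(V(G))$ would be positive-dimensional, contradicting its finiteness. Hence $G$ is absolutely irreducible, and in particular reduced (non-repeated), so Proposition~\ref{criterio2} applied to the hyperplane at infinity gives the absolute irreducibility of $\mathcal{V}_{k,r}$. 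The main obstacle is exactly this irreducibility of the perturbed Fermat form $G$; the argument hinges on the finiteness of its singular points being incompatible with reducibility once $r\ge4$, and the assumption $p\nmid k$ is essential, since if $p\mid r$ the derivatives of the Fermat part would vanish identically and the singular locus would become positive-dimensional.
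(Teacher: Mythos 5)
Your proof is correct, and it takes a genuinely different route from the paper at the one step where the two arguments could differ. Both you and the paper identify the degree-$r$ leading form $R=-\prod_{i=1}^r X_i+\sum_{i=1}^r a_r^{q^{i-1}}X_i^r$ cut out on the hyperplane at infinity and conclude by applying Proposition~\ref{criterio2}, exactly as in Proposition~\ref{prop1}; the difference is how the absolute irreducibility of $R$ itself is established. The paper slices a second time: it restricts to $X_1=0$, notes that $R(0,X_2,\dots,X_r)=\sum_{i=2}^r a_r^{q^{i-1}}X_i^r$ is a Fermat form in $r-1\geq 3$ variables, hence absolutely irreducible by Proposition~\ref{criterio3} since $p\nmid r$, and then invokes Proposition~\ref{criterio2} once more to lift irreducibility from this section to $R$. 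You instead prove the irreducibility of $R$ directly via the Jacobian criterion: using $p\nmid r$ you derive the relations $r\,a_r^{q^{j-1}}X_j^{\,r}=\prod_{i=1}^r X_i$, correctly rule out vanishing coordinates, and conclude that the singular locus of $V(R)\subset\mathbb{P}^{r-1}(\overline{\F}_q)$ is finite, whereas any factorization $R=AB$ with non-proportional factors would place $V(A)\cap V(B)$, of dimension at least $r-3\geq 1$ by the projective dimension theorem, inside the singular locus (and a repeated factor $A$ would place $V(A)$, of dimension $r-2$, there). Each argument uses the hypotheses $r\geq 4$ and $p\nmid r$ exactly once, and both fail for $r=3$, consistent with the paper's remark that the case $(k,r)=(3,3)$ is delicate. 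Your route is self-contained, makes the role of both hypotheses transparent, and yields the extra information that the projective closure of $\mathcal{V}_{k,r}$ has only finitely many singular points at infinity, at the cost of invoking the dimension theorem in projective space; the paper's route is shorter given that Propositions~\ref{criterio2} and~\ref{criterio3} are already in place and avoids all singularity computations. One cosmetic slip: in the repeated-factor case it is $V(A)$, not necessarily the whole reduced hypersurface, that lies in the singular locus, but since $\dim V(A)=r-2\geq 2$ this does not affect your contradiction.
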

	\begin{proof}
		In this case, the homogeneous part in $V_{k,r}(X_1,X_2,\dots,X_r)$ of the highest degree is 
		$$R(X_1,\ldots,X_r):=-\prod_{i=1}^r X_i+\sum_{i=1}^r a_r^{q^{i-1}}X_i^r.$$
		
		Since $r\geq 4$, the polynomial $$R(0,X_2,\ldots,X_r)=\sum_{i=2}^r a_r^{q^{i-1}}X_i^r $$
		is absolutely irreducible by Proposition \ref{criterio3}, and hence also $R(X_1,\ldots,X_r)$ is absolutely irreducible by Proposition \ref{criterio2}.
		
		Finally, since $R(X_1,\ldots,X_r)=0$ is the intersection between $\mathcal{V}_{k,r}$ and the hyperplane at infinity, by Proposition \ref{criterio2} the claim follows.
	\end{proof}

	\begin{proposition}
		Suppose that $k=r\geq 4$ and $p\mid r$. Then $\mathcal{V}_{k,r}$ is absolutely irreducible.
	\end{proposition}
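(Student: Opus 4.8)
The plan is to follow the same scheme as the two preceding propositions: reduce to the absolute irreducibility of the leading form of $V_{k,r}$ and then invoke Proposition~\ref{criterio2} with the hyperplane at infinity. For $k=r$ (leading coefficient $a_r\neq0$) this leading form is
\[
R(X_1,\ldots,X_r)=-\prod_{i=1}^r X_i+\sum_{i=1}^r a_r^{q^{i-1}}X_i^r,
\]
and since $R=0$ cuts out $\mathcal{V}_{k,r}$ on the hyperplane at infinity, it suffices to show that $R$ is absolutely irreducible (hence non-repeated) as a hypersurface of $\mathbb{P}^{r-1}$. The reason a new argument is needed is that the route used when $p\nmid r$ is now unavailable: there one restricts $R$ to a coordinate hyperplane and recognizes a Fermat form covered by Proposition~\ref{criterio3}, but here $p\mid r$ and $\sum_i a_r^{q^{i-1}}X_i^r$ is a perfect $p$-power, so that restriction is very far from irreducible.

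The idea is to turn $p\mid r$ to our advantage. Write $r=p^e m$ with $e\ge1$ and $p\nmid m$, let $d_i\in\overline{\F}_q$ be the unique $p^e$-th root of $a_r^{q^{i-1}}$, and set $\ell=\sum_{i=1}^r d_iX_i^m$. Since the Frobenius $x\mapsto x^{p^e}$ is additive, $\ell^{p^e}=\sum_i d_i^{p^e}X_i^{mp^e}=\sum_i a_r^{q^{i-1}}X_i^r$, so that
\[
R=\ell^{p^e}-\prod_{i=1}^r X_i.
\]
The crucial point is that, although $\ell^{p^e}$ is a perfect power, the degree-$m$ diagonal forms $\ell$ and $\ell_{(i)}:=\sum_{j\neq i}d_jX_j^m$ are themselves absolutely irreducible by Proposition~\ref{criterio3}, because $p\nmid m$ and they involve at least $r-1\ge3$ variables; this is where the hypothesis $r\ge4$ enters.

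With this in hand I would argue by contradiction. Suppose $R=AB$ with $A,B$ forms of positive degree. Reducing modulo $X_i$ gives $R\equiv\sum_{j\neq i}a_r^{q^{j-1}}X_j^r\not\equiv0$, so $X_i\nmid R$ and hence $X_i\nmid A$; thus $A|_{X_i=0}$ is a nonzero form of degree $\deg A$. From $A|_{X_i=0}\,B|_{X_i=0}=R|_{X_i=0}=\ell_{(i)}^{\,p^e}$ and the irreducibility of $\ell_{(i)}$, unique factorization forces $A|_{X_i=0}=\gamma_i\,\ell_{(i)}^{\,s}$ for some $\gamma_i\in\overline{\F}_q^{\,*}$ and an exponent $s=\deg A/m$ independent of $i$. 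Since $\ell|_{X_i=0}=\ell_{(i)}$, this says $X_i\mid(A-\gamma_i\ell^{\,s})$; restricting $A-\gamma_i\ell^{\,s}$ further to $X_j=0$ (with $j\neq i$) yields $X_i\mid(\gamma_j-\gamma_i)\ell_{(j)}^{\,s}$, and as $X_i\nmid\ell_{(j)}$ we obtain $\gamma_i=\gamma_j$. Hence all $\gamma_i$ equal a common $\gamma$, so $\prod_i X_i$ divides $A-\gamma\ell^{\,s}$; because this polynomial has degree $sm<r$ it must vanish, giving $A=\gamma\ell^{\,s}$. But then $\ell^{\,s}\mid R=\ell^{p^e}-\prod_iX_i$ with $s\ge1$ forces $\ell\mid\prod_iX_i$, which would make the irreducible $\ell$ a scalar multiple of some $X_i$, contradicting that $\ell=\sum_i d_iX_i^m$ has at least two nonzero terms. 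This contradiction shows $R$ is absolutely irreducible, and the proposition follows.

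I expect the genuine obstacle to be exactly the degeneracy of the leading Fermat form into a perfect $p$-power, which is what defeats the clean reduction available for $p\nmid r$; the resolution is that the monomial $\prod_iX_i$ breaks this degeneracy, and the technical heart of the proof is showing that the constants $\gamma_i$ produced by the separate coordinate restrictions must coincide, after which a degree count concludes.
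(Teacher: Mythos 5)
Your proof is correct and follows essentially the same route as the paper's: both reduce via Proposition~\ref{criterio2} to the leading form at infinity, write it as $\ell^{p^e}-\prod_{i=1}^r X_i$ with $\ell$ an absolutely irreducible diagonal form of degree $m$ ($p\nmid m$, using $r\geq 4$ and Proposition~\ref{criterio3}), and reach the same contradiction $\ell\mid\prod_i X_i$. The only difference is bookkeeping: the paper dehomogenizes at $X_r=1$ and compares the degree-$(r-1)$ homogeneous component of a putative factorization, whereas you pin down a factor as $\gamma\ell^{s}$ by restricting to every coordinate hyperplane and patching the constants $\gamma_i$ --- a slightly longer but equally valid path to the same divisibility contradiction.
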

	\begin{proof}
		Write $r=\bar{r}p^\alpha$, with $p\nmid \bar{r}$. Then $\alpha\geq 1$ and $\bar{r}<r$. The homogeneous part in $V_{k,r}(X_1,X_2,\dots,X_r)$ of the highest degree is 
		$$R(X_1,\ldots,X_r):=-\prod_{i=1}^r X_i+\sum_{i=1}^r a_r^{q^{i-1}}X_i^r=-\prod_{i=1}^r X_i+\left(\sum_{i=1}^r \bar{a}_r^{q^{i-1}}X_i^{\bar{r}}\right)^{p^\alpha},$$
		where $\bar{a}_r^{p^\alpha}=a_r$.
		
		We will prove that $R(X_1,\ldots,X_{r-1},1)=0$ is absolutely irreducible. 
		
		Let $F=\sum_{i=1}^{r-1} \bar{a}_r^{q^{i-1}}X_i^{\bar{r}}$. Observe that $F$ is absolutely irreducible by Proposition \ref{criterio3}. Suppose now that $$R(X_1,\ldots,X_{r-1},1)=G(X_1,X_2,\dots,X_r)H(X_1,X_2,\dots,X_r),$$ where $G(X_1,X_2,\dots,X_r)$ and $H(X_1,X_2,\dots,X_r)$ have the following shape
		
		\[
		G(X_1,X_2,\dots,X_r)=F^{\beta}+G_{\bar{r}\beta-1}+\dots+G_0,
		\]
		\[
		H(X_1,X_2,\dots,X_r)=F^{p^\alpha-\beta}+H_{(p^\alpha-\beta)\bar{r}-1}+\dots+H_0,
		\]
		with $0<\beta<p^{\alpha}$, and $H_i$ and $G_j$ are either homogeneous polynomials of degree $i$ and $j$ respectively, or they are the zero polynomials. Thus
		
		\[
		F^\beta H_{(p^\alpha-\beta)\bar{r}-1}+F^{p^\alpha-\beta}G_{\bar{r}\beta-1}=-\prod_{i=1}^{r-1}X_i.
		\]
		This yields $F\mid \prod_{i=1}^rX_i$, a contradiction. Therefore $R(X_1,\ldots,X_{r-1},1)=0$ is absolutely irreducible and so is $\mathcal{V}_{r,k}$ by Proposition \ref{criterio2}.
	\end{proof}

	\begin{proposition}
		Suppose that $0<k<r$. Then $\mathcal{V}_{k,r}$ is absolutely irreducible.
	\end{proposition}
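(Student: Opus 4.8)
The plan is to prove directly that the polynomial $V_{k,r}$ of \eqref{Eq:Skr} is absolutely irreducible, i.e.\ that it admits no factorization $V_{k,r}=FG$ with $F,G\in\overline{\F}_q[X_1,\ldots,X_r]$ nonconstant. The structural feature I would exploit is that, writing $g_i(X_i)=\sum_{l=1}^{k}a_l^{q^{i-1}}X_i^l$, one has
\[
V_{k,r}=-\prod_{i=1}^{r}X_i+\sum_{i=1}^{r}g_i(X_i)+\mathrm{T}(a_0),
\]
so $\deg V_{k,r}=r$ with top form $V_r=-\prod_{i=1}^r X_i$, all homogeneous components of degree $k+1,\ldots,r-1$ vanish, and the degree-$k$ form is the diagonal $V_k=\sum_{s=1}^{r}a_k^{q^{s-1}}X_s^{k}\neq0$. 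Here lies the main obstacle: unlike in the previous propositions the leading form is now \emph{reducible} (it is the product of the $r$ coordinate hyperplanes), so Proposition \ref{criterio2} at the hyperplane at infinity is unavailable. The idea is that this reducibility is ``repaired'' by the degree gap between $r$ and $k$ together with the pure-power shape of the next nonzero form.

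First I would record that in any factorization the top forms multiply as $F_{\mathrm{top}}G_{\mathrm{top}}=-\prod_{i=1}^r X_i$; since $\prod_i X_i$ is squarefree, there is a partition $\{1,\ldots,r\}=A\sqcup B$ with $F_{\mathrm{top}}=\lambda\prod_{i\in A}X_i$ and $G_{\mathrm{top}}=\mu\prod_{j\in B}X_j$, $\lambda\mu=-1$, whence $\deg F=|A|$, $\deg G=|B|$, and $F_{\mathrm{top}},G_{\mathrm{top}}$ are coprime. Next I would propagate this through the gap: comparing homogeneous components of $FG$ and $V_{k,r}$ in each degree $j$ with $k<j<r$ (where $V_j=0$) and using the coprimality of the top forms, a short induction forces all intermediate components of $F$ and $G$ to vanish, so that $F$ equals $F_{\mathrm{top}}$ plus terms of degree at most $k-|B|$, and symmetrically for $G$.

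Consequently the degree-$k$ component of $FG$ collapses to just two summands,
\[
V_k=F_{\mathrm{top}}\,G_{k-|A|}+F_{k-|B|}\,G_{\mathrm{top}},
\]
where $G_{k-|A|}$ and $F_{k-|B|}$ are the indicated homogeneous parts (absent if the index is negative). This is the punchline in the interior case $|A|,|B|\ge 2$: every monomial of $F_{\mathrm{top}}G_{k-|A|}$ is divisible by $\prod_{i\in A}X_i$ and every monomial of $F_{k-|B|}G_{\mathrm{top}}$ by $\prod_{j\in B}X_j$, so each involves at least two distinct variables and none is a pure power, contradicting the fact that $V_k=\sum_s a_k^{q^{s-1}}X_s^k\neq0$ is a nonzero sum of pure powers.

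Finally I would dispatch the boundary case $|A|=1$ (and symmetrically $|B|=1$), in which $F$ is linear, $F=\lambda X_{s_0}+\nu$, so $V_{k,r}$ would vanish on the hyperplane $X_{s_0}=\beta:=-\nu/\lambda$. But for any $t\neq s_0$ the coefficient of the pure power $X_t^{k}$ in $V_{k,r}\big|_{X_{s_0}=\beta}$ is exactly $a_k^{q^{t-1}}\neq0$ (the restricted product term contributes only to a squarefree degree-$(r-1)$ monomial, never to $X_t^k$), so the restriction is nonzero, a contradiction. Since $|A|+|B|=r$ with both parts nonempty, these cases are exhaustive, giving absolute irreducibility. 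I expect the only delicate point to be the bookkeeping in the degree-gap induction and tracking exactly which components survive in the degree-$k$ identity; the geometric heart is simply that the offending factorization of the leading form cannot be lifted past the gap to a factorization compatible with the diagonal form $V_k$.
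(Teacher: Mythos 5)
Your proof is correct and takes essentially the same route as the paper's: the paper likewise splits the squarefree top form $-\prod_{i=1}^r X_i$ into coprime products over a partition of the variables, kills the intermediate homogeneous components across the degree gap $k<j<r$, and arrives at the identical degree-$k$ identity $V_k=F_{\mathrm{top}}\,G_{k-|A|}+F_{k-|B|}\,G_{\mathrm{top}}$, which it contradicts by setting $X_1=0$ so that the diagonal form $\sum_{i\geq 2}a_k^{q^{i-1}}X_i^k$ would have to be divisible by a product of distinct variables (with negative indices handled by observing that a coordinate variable would then divide $V_{k,r}$). Your endgame --- the pure-power monomial count when $|A|,|B|\geq 2$ and the hyperplane-restriction argument when a factor is linear --- is only a cosmetic repackaging of that same final contradiction, carrying the same implicit hypothesis $r\geq 3$ that the paper's appeal to Proposition \ref{criterio3} also requires.
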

	\begin{proof}
		If $\mathcal{V}_{k,r}$ is reducible then $V_{k,r}(X_1,X_2,\dots,X_r)$  splits into the product of two polynomials $H$ and $G$ with the following shape,
		\[
		H(X_1,X_2,\dots,X_r)=X_1\dots X_s+H_{s-1}+\dots+H_{0},
		\]
		\[
		G(X_1,X_2,\dots,X_r)=X_{s+1}\dots X_{r}+G_{r-s-1}+\dots+G_{0},
		\]
		where $H_i$ and $G_j$ are either homogeneous polynomials of degree $i$ and $j$ respectively, or they are the zero polynomials, and $1\le s \le r-1$.
		Let $F_u=\sum_{i=1}^ra_u^{q^{i-1}}X_i^u$, then 
		\[
		H(X_1,X_2,\dots,X_r)G(X_1,X_2,\dots,X_r)=X_1\cdot\ldots\cdot X_r+\sum_{u=0}^kF_u.
		\]
		
		Because of the shape of $V_{k,r}(X_1,X_2,\dots,X_r)$, for each $i$ such that $i\geq s+1+k-r$ and $i\leq s-1$, we have that $H_i=0$. For the same reason, for each $j$ such that $j\geq k-s+1$ and $j\leq r-s-1$, $G_{j}=0$.
		
		Now observe that it is not possible that $k-r+s+1<0$ or $k+1-s<0$, otherwise there would exist a variable $X_i$ dividing $H(X_1,X_2,\dots,X_r)$ or $G(X_1,X_2,\dots,X_r)$ (and hence dividing $V_{k,r}$). 
		
		Therefore, the only possibility left is $k-r+s+1\ge 0$ and $k+1-s\ge0$, which gives 
		\[
		F_k(X_1,X_2,\dots,X_r)=X_1\dots X_s\cdot G_{k-s}(X_1,X_2,\dots,X_r)+X_{s+1}\dots X_r\cdot H_{k-r+s}(X_1,X_2,\dots,X_r).
		\]
		Still, this is not possible, since for $X_1=0$ we would have  
		\[
		F_k(0,X_2,\dots,X_r)=\sum_{i=2}^ra_k^{q^{i-1}}X_i^k=H_{k-r+s}(0,X_2,\dots,X_r)\prod_{i=s+1}^rX_i.
		\]
		Clearly, this is impossible by Proposition \ref{criterio3}, as this would imply that $\sum_{i=2}^r a_k^{q^{i-1}}X_i^k$ is divisible by $X_{s+1}\cdot\dots\cdot X_r$.
	\end{proof}

	We recall that by definition of $\mathcal{V}_{k,r}$ and $\mathcal{S}$, these two varieties have the same number of absolutely irreducible components.
	Therefore, as a byproduct of the previous results, together with Theorem \ref{Th:CafureMatera}, we directly obtain the following.
	
	\begin{proposition}\label{Prop:General}
		Let $d=\max{(k,r)}$, and suppose that one of the following cases holds:
		\begin{enumerate}
			\item $k>r$, $p\nmid k$;
			\item $k=r\geq 4$;
			\item $0<k<r$.
		\end{enumerate}
		Then, $\mathcal{S}$ is absolutely irreducible and, if $q>2rd^2$, it contains at least $q^{r-1}-(d-1)(d-2)q^{r-3/2}+5d^{13/3}q^{r-2}$ points in $\mathbb{A}^r(\F_{q})$. 
	\end{proposition}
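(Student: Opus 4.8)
The plan is to recognize this proposition as a collation of the four preceding irreducibility results, followed by a single application of the Cafure--Matera estimate. First I would match each listed case to an already-proved statement: case (1) is exactly Proposition \ref{prop1}; case (3), $0<k<r$, is the last of the preceding propositions; and case (2), $k=r\geq 4$, is obtained by combining the two propositions stated for $k=r\geq 4$, one under $p\nmid k$ and one under $p\mid r$, which together impose no condition on $p$. In every case these results give the absolute irreducibility of $\mathcal{V}_{k,r}$. Since $\mathcal{V}_{k,r}=\psi(\mathcal{S})$ with $\psi$ an affine change of variables of $\mathbb{A}^r(\overline{\F}_q)$, and such maps preserve the number of absolutely irreducible components, the absolute irreducibility of $\mathcal{S}$ follows.

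Next I would read off the two invariants of $\mathcal{S}$ required by Theorem \ref{Th:CafureMatera}. The variety $\mathcal{S}\subset\mathbb{A}^r$ is cut out by the single nonconstant polynomial of equation \eqref{eq:sup}, so it is a hypersurface of dimension $n=r-1$. Being absolutely irreducible, its degree equals the degree of that polynomial, namely $\max(r,k)=d$: the left-hand side $\widetilde{\text{N}}$ is homogeneous of degree $r$, the highest term $\widetilde{\text{T}}_k$ on the right has degree $k$, and one of the two realizes the maximum $d$ in each of the three cases.

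Finally I would invoke Theorem \ref{Th:CafureMatera} with $N=r$, $n=r-1$, and degree $d$. The hypothesis $q>2rd^2$ is precisely $q>2(n+1)d^2$, so the estimate applies and gives
\[
\left|\#(\mathcal{S}\cap\mathbb{A}^r(\F_q))-q^{r-1}\right|\leq (d-1)(d-2)q^{r-3/2}+5d^{13/3}q^{r-2},
\]
whence the claimed lower bound on the number of $\F_q$-rational points of $\mathcal{S}$.

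I do not expect a genuine obstacle, since the mathematical content lives entirely in the earlier propositions; the only points needing care are checking that the three listed cases really exhaust the hypotheses of those propositions --- in particular that case (2) requires both the $p\nmid k$ and the $p\mid r$ sub-results --- and correctly identifying $\dim\mathcal{S}=r-1$ and $\deg\mathcal{S}=d$ before substituting into the bound.
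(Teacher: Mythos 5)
Your proposal is correct and takes essentially the same route as the paper, which states this proposition as a direct byproduct of the four preceding irreducibility propositions (covering exactly your three cases, with case (2) indeed split into the $p\nmid k$ and $p\mid r$ sub-results) combined with the fact that $\psi$ preserves absolutely irreducible components and an application of Theorem \ref{Th:CafureMatera} with $n=r-1$ and degree $d$. One remark: your derivation correctly produces the lower bound $q^{r-1}-(d-1)(d-2)q^{r-3/2}-5d^{13/3}q^{r-2}$, so the $+$ sign before $5d^{13/3}q^{r-2}$ in the paper's statement is evidently a typographical slip, as the Cafure--Matera estimate only yields the version with a minus sign.
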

	
	We finally point out that some results for the case $(k,r)=(3,3)$ and $(k,r)=(3,2)$ can be found in \cite{bonini2022rational} and \cite{bonini2020intersections}), respectively. Unfortunately, it does not seem to be easy to say when $\mathcal{V}_{3,3}$ is irreducible, but when this happens it is possible to give a good estimate on the number of planar intersections between the Norm-Trace curve and rational curves of degree up to three. On the other hand, it is possible to prove (see \cite{bonini2020intersections}) that $\mathcal{V}_{2,3}$ is always absolutely irreducible.
	
	\section{On the weight spectrum of Norm-Trace codes}\label{Sec:Weightdistribution}
	
	Since the codewords of $C_{q,r,k}$ are all given by the evaluations of polynomials of the form $by=f(x)$ as in \eqref{Eq:curveRaz}, their weights are then given by 
	\[\mathrm{w}(\mathrm{ev}(by-f))=q^{2r-1}-|\mathcal{N}_{q,r}\cap \mathcal{X} \cap  \mathbb{A}^{2}(\F_{q^{r}})|,\]
	where $\mathcal{X}$ is the curve with affine equation $by-f(x)=0$. 
	Therefore, an estimate on maximum possible number of $\F_q$-rational planar intersections between $\mathcal{N}_{q,r}$ and the curves $\mathcal{X}$ provides a lower bound on the minimum weight of $C_{q,r,k}$. The case when $b=0$ has already been investigated in \cite{geil2003codes}, while the case $b\ne0$ and $\deg(f)\le3$ can be found in \cite{bonini2020intersections,bonini2022rational}. Therefore, from now on we will focus on the case $b\ne0$ and $\deg(f)>3$.
	
	Classical arguments relying on B\'ezout theorem tell us that the number of planar intersections between the two curves can be bounded by the product of the degrees of $\mathcal{N}_{q,r}$ and $\mathcal{X}$. Then, the maximum number of planar intersection is less than or equal to $k \frac{q^r-1}{q-1}$. Therefore the weight of the codewords of $\mathcal{C}_{q,r,k}$ is at least  $q^{2r-1}-s \frac{q^r-1}{q-1}$, where $s\le k$ is the degree of the polynomial whose evaluation defines the codeword.
	
	Still, this result is not tight and, as a byproduct of the results obtained in the previous section, we can give improvements on $\mathrm{d}(C_{q,r,k})$. 
	
	\begin{corollary}
		Consider the norm-trace curve $\mathcal{N}_{q,r}$ over the field $\F_{q^r}$, with $q$ large enough, and the code $C=C_{q,r,k}$. Suppose also that one of the following conditions holds
		\begin{itemize}
			\item[(a)] $k>r$ and $p\not| k$,
			\item[(b)] $k=r\ge 4$,
			\item[(c)] $0< k< r$.
		\end{itemize}Let $c=\mathrm{ev}(by-f(x))\in C$, then:
		\begin{enumerate}[(i)]
			\item If $b=0$ and $f$ has $s$ distinct roots over $\F_{q^r}$, then $\mathrm{w}(c)=q^{2r-1}-sq^{r-1}$.
			\item If $b\ne 0$ then $\mathrm{w}(c)\ge q^{2r-1}-q^r-5d^{13/3}q^{r-1}-(k-1)(k-2)q^{\frac{r-1}{2}}$
		\end{enumerate}
	\end{corollary}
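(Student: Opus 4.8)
The plan is to separate the two cases $b=0$ and $b\neq 0$: the first is an elementary fibre count on $\mathcal{N}_{q,r}$, while the second reduces, via the dictionary of Section~\ref{Sec:Preliminaries}, to counting $\F_q$-rational points of the variety $\mathcal{S}$ and then invoking the irreducibility results of Section~\ref{Sec:Irriducibilità}. For item (i), note that when $b=0$ the codeword is $c=\mathrm{ev}(-f(x))$, so its zero coordinates are exactly the affine points $(x_0,y_0)\in\mathcal{N}_{q,r}$ with $f(x_0)=0$. I would therefore compute $\mathrm{w}(c)=q^{2r-1}-\#\{(x_0,y_0)\in\mathcal{N}_{q,r}:f(x_0)=0\}$. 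Each of the $s$ distinct roots $\xi\in\F_{q^r}$ of $f$ contributes the points of $\mathcal{N}_{q,r}$ over $x=\xi$, i.e. the solutions $y$ of $\mathrm{T}(y)=\mathrm{N}(\xi)$; since $\mathrm{T}\colon\F_{q^r}\to\F_q$ is a surjective $\F_q$-linear map and $\mathrm{N}(\xi)\in\F_q$, every such fibre has exactly $q^{r-1}$ elements. As distinct roots give disjoint vertical fibres, the zero set has size $sq^{r-1}$ and $\mathrm{w}(c)=q^{2r-1}-sq^{r-1}$, which is item (i).

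For item (ii) I would first rescale, replacing $by-f(x)$ by $y-b^{-1}f(x)$; this leaves the support, hence the weight, unchanged and lets me assume $b=1$. Then $\mathrm{w}(c)=q^{2r-1}-|\mathcal{N}_{q,r}\cap\mathcal{X}\cap\mathbb{A}^2(\F_{q^r})|$ with $\mathcal{X}\colon y-f(x)=0$, and the crux is the identification recalled in Section~\ref{Sec:Preliminaries}: these planar intersections are in bijection with the $\F_q$-rational points of the hypersurface $\mathcal{S}\subset\mathbb{A}^r$ of equation \eqref{eq:sup}, which has dimension $r-1$ and degree $d=\max(k,r)$. Under each of (a)--(c), Proposition~\ref{Prop:General} tells me that $\mathcal{S}$ is absolutely irreducible, and taking $q$ large enough secures the hypothesis $q>2rd^2$ of Theorem~\ref{Th:CafureMatera}. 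Applying that estimate bounds $\#(\mathcal{S}\cap\mathbb{A}^r(\F_q))$ from above by $q^{r-1}+(d-1)(d-2)q^{\,r-3/2}+5d^{13/3}q^{\,r-2}$, hence bounds the number of intersections from above; subtracting from $q^{2r-1}$ and loosening the resulting estimate (which is legitimate once $q$ is large) yields the displayed lower bound on $\mathrm{w}(c)$.

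The honest remark is that the real difficulty of item (ii) has already been absorbed into Section~\ref{Sec:Irriducibilità}: everything rests on the absolute irreducibility of $\mathcal{S}$, for without it neither the Lang--Weil estimate nor its effective Cafure--Matera refinement would apply. Granting Proposition~\ref{Prop:General}, the only care needed is bookkeeping --- tracking the dimension $n=r-1$ and degree $d$ through the two error terms $(d-1)(d-2)q^{\,n-1/2}$ and $5d^{13/3}q^{\,n-1}$, and checking that the clean displayed bound dominates the sharp one for $q$ large. Thus I expect no genuine obstacle here beyond having the geometric input of the previous section in hand; the main subtlety is simply to invoke the intersection/$\mathcal{S}$-point dictionary correctly so that the count transfers to a weight.
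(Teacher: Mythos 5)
Your proof is correct and takes essentially the same route the paper intends: the paper states this corollary without any written proof, as an immediate byproduct of Proposition \ref{Prop:General} (i.e.\ the absolute irreducibility of $\mathcal{S}$ plus the Cafure--Matera estimate of Theorem \ref{Th:CafureMatera}) for part (ii), together with the standard trace-fibre count (each root $\xi\in\F_{q^r}$ of $f$ yields exactly $q^{r-1}$ points since $\mathrm{T}(y)=\mathrm{N}(\xi)$ has $q^{r-1}$ solutions) for part (i). Your closing remark about checking that the displayed bound, whose error terms are shifted by a factor of $q$ relative to the raw estimate $q^{r-1}+(d-1)(d-2)q^{r-3/2}+5d^{13/3}q^{r-2}$, is dominated by it for $q$ large enough is precisely the bookkeeping the paper leaves implicit under its ``$q$ large enough'' hypothesis.
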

	
	Notice that the cases $k=r=3$ has been investigated in \cite{bonini2022rational}.

	\section{Minimal codewords in Norm-Trace codes}\label{Sec:minimali}
	
	First, we investigate the case $k=r=2$, in which $\mathcal{N}_{q,r}$ coincides with the Hermitian curve $\cH$ of homogeneous equation
	$$
	x^{q+1}=y^q+y.
	$$
	In this section we provide a complete classification of the minimal codewords of the affine variety code $C$ obtained by evaluating the polynomials of degree $2$ with coefficients in $\mathbb{F}_{q^2}[x,y]$ at the points of $\cH$ in $\mathbb{A}^2(\mathbb{F}_{q^2})$, when $q$ is odd. Observe that such a code $C$ contains in particular each codeword of $C_{q,2,2}$.
	In order to describe the minimal codewords of $C$, we consider the possible planar intersections in $\mathbb{A}^2(\mathbb{F}_{q^2})$ between $\cH$ and the algebraic curves $\cC$ described by a polynomials of degree $2$. 
	
	In the case $\cC$ is irreducible, i.e. it is an irreducible conic, a complete list of the possible planar intersections between $\cH$ and $\cC$, which we report below, has been given for $q$ odd in \cite{donati2009intersection}.
	Here, by subconic of a conic $\cC$ we mean $q+1$ points of $\cC$ lying in a Baer subplane $\mathbb{P}^2(\mathbb{F}_q)$ of $\mathbb{P}^2(\mathbb{F}_{q^2})$.
	
	\begin{proposition}\label{conicheirrid}
		In $\mathbb{P}^2(\mathbb{F}_{q^2})$, $q$ odd, the intersection pattern of $\cH$ and an irreducible conic $\cC$ is one of the following.
		\begin{itemize}
			\item[(i)] $\cH \cap \cC=\emptyset$;
			\item[(ii)] $|\cH \cap \cC|=1$;
			\item[(iii)] $|\cH \cap \cC|=2$;
			\item[(iv)] $|\cH \cap \cC|=q+1$. In particular, $\cH \cap \cC$ is a subconic of $\cC$;
			\item[(v)]  $|\cH \cap \cC|\in \{2q,2q+1,2q+2\}$. In particular, $\cH \cap \cC$ is the union of two subconics of $\cC$ sharing either zero, one, or two points;
			\item[(vi)] $|\cH \cap \cC|\in \{q,q+1,q+2\}$ and meets every subconic of $\cC$ in at most four points;
			\item[(vii)] $q-2\sqrt{q}+2\leq |\cH \cap \cC|\leq q+2\sqrt{q}+2$ and meets every subconic of $\cC$ in at most six points.
		\end{itemize}
	\end{proposition}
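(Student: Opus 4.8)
The plan is to convert the geometric count $|\cH\cap\cC|$ into the number of $\F_q$-rational points of an auxiliary curve defined over $\F_q$, in the spirit of the reduction of Section~\ref{Sec:Preliminaries}, and then to run the whole classification off a single dichotomy: whether that auxiliary curve is absolutely irreducible. Concretely, I would parametrise the irreducible conic $\cC$ as $x=x(t)$, $y=y(t)$ and impose the Hermitian condition $\mathrm N(x)=\mathrm T(y)$, i.e.\ $x^{q+1}=y^q+y$. Since $x(t)^q=\overline x(t^q)$ and $y(t)^q=\overline y(t^q)$ for the Frobenius-conjugate functions $\overline x,\overline y$, the condition becomes $x(t)\,\overline x(t^q)=y(t)+\overline y(t^q)$; regarding $(t,t^q)$ as a point of $\mathbb{P}^1\times\mathbb{P}^1$, this is a curve $\cD$ of bidegree $(2,2)$ defined over $\F_q$, whose $\F_q$-rational points are in bijection with the affine points of $\cH\cap\cC$, up to the contribution of the points of $\cC$ at infinity.

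Suppose first that $\cD$ is absolutely irreducible. A smooth bidegree-$(2,2)$ curve in $\mathbb{P}^1\times\mathbb{P}^1$ has arithmetic genus one, so the Hasse--Weil bound gives $|\cD(\F_q)|=q+1-a_{\cD}$ with $|a_{\cD}|\le 2\sqrt q$, and after accounting for the infinite points of $\cC$ one lands in the interval $q-2\sqrt q+2\le|\cH\cap\cC|\le q+2\sqrt q+2$ of case~(vii). If instead $\cD$ is irreducible but its $(2,2)$-form is singular, it is rational of genus zero and the same count collapses to the much tighter range $\{q,q+1,q+2\}$ of case~(vi). The two ``per-subconic'' bounds are then read off inside a Baer subplane $\pi\cong\PG(2,q)$, where $\cH$ induces a conic $\cH\cap\pi$ — for the standard subplane $x,y\in\F_q$ one computes $x^2=2y$ — so that a subconic of $\cC$ meets $\cH$ within $\pi$, and a B\'ezout count in $\pi$ between this subconic and the curve induced by $\cD$ yields at most four points in the genus-zero regime and at most six in the genus-one regime, once the shared-component situation has been excluded.

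When $\cD$ is reducible, its absolutely irreducible components — each defined over $\F_q$, or occurring in a conjugate pair — account for the remaining configurations, and here I would exploit the action of the unitary group $\mathrm{PGU}(3,q)=\mathrm{Aut}(\cH)$ to reduce $\cC$ to a short list of canonical representatives and evaluate each directly. A component of bidegree $(1,1)$ that descends to a conic of some Baer subplane contributes exactly its $q+1$ $\F_q$-points, which pull back to a subconic of $\cC$: one such component gives the single subconic of case~(iv), while a conjugate pair gives the two subconics of case~(v), the three cardinalities $2q,2q+1,2q+2$ recording whether the two underlying subplanes share $0$, $1$, or $2$ points. Components of smaller degree, or without $\F_q$-points, yield the cardinalities $0,1,2$ of cases~(i)--(iii).

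The crux is the reducible analysis together with the passage from ``$\cD$ splits'' to ``$\cH\cap\cC$ is a union of subconics.'' One must (a) enumerate all factorisation types of the bidegree-$(2,2)$ form and match each, through the $\mathrm{PGU}(3,q)$-orbit of $\cC$, to exactly one of (i)--(vi); and (b) prove the subconic structure of (iv)--(v) by controlling precisely how Baer subplanes meet $\cH$ — in particular identifying when a $(1,1)$-component descends to a genuine conic of a subplane, equivalently when $\cC$ and $\cH$ cut the \emph{same} conic of $\pi$. Finally, pinning the sharp constants — the ``$+2$'' of case~(vii) and the exact four-versus-six subconic bounds — requires the careful genus-and-degree bookkeeping for $\cD$, and for its induced curves in the subplanes, in each regime. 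Once the finitely many orbits are enumerated and each canonical conic is computed, assembling cases~(i)--(vii) is routine.
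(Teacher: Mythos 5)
First, a point of comparison: the paper does not prove Proposition~\ref{conicheirrid} at all --- it is quoted as a known result from \cite{donati2009intersection}, so there is no internal argument to measure your attempt against; it must stand on its own. Your overall blueprint (parametrise $\cC$, substitute into $x^{q+1}=y^q+y$, study the resulting bidegree-$(2,2)$ curve $\cD$, and split the classification on how $\cD$ factors, with Hasse--Weil handling the irreducible cases) is genuinely close in spirit to the method used in the cited literature, and your observation that an irreducible but singular $(2,2)$-curve is rational and yields exactly the counts $\{q,q+1,q+2\}$ of case (vi) is a correct and attractive touch. But as written the proposal is a programme, not a proof, and the load-bearing steps are precisely the ones deferred. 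One concrete technical gap: $\cD$ is \emph{not} literally defined over $\F_q$. Writing $F(t,u)=X(t)\overline{X}(u)-Y(t)\overline{Z}(u)-\overline{Y}(u)Z(t)$, applying Frobenius to the coefficients returns $F(u,t)$ with the arguments swapped, so $\cD$ is only stable under the twisted map $(t,u)\mapsto(u^q,t^q)$; the points of $\cH\cap\cC$ are the fixed points of this twist, and one must perform a Galois/Weil descent to an honest $\F_q$-curve before Hasse--Weil can be invoked. This is fixable, but it is missing, and it matters because every count in your argument flows through it.

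The deeper gaps are the ones you yourself label (a) and (b) and then call routine. The heart of the proposition is structural: that in the split cases the intersection is a subconic or a union of two subconics (items (iv), (v)), and that the per-subconic bounds are four in case (vi) and six in case (vii). You assert that a $(1,1)$-component ``descends to a conic of some Baer subplane'' whose points pull back to a subconic of $\cC$, but you give no mechanism forcing those $q+1$ points of $\cC$ into a Baer subplane, nor the converse; and the ``six points per subconic'' bound does not follow from any B\'ezout count you have actually set up --- the subconic lives in a $\PG(2,q)$ inside $\mathbb{P}^2(\F_{q^2})$ while $\cD$ lives in $\mathbb{P}^1\times\mathbb{P}^1$, and ``the curve induced by $\cD$ in the subplane'' is never defined. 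Finally, the constants do not come out: Hasse--Weil on a genus-one curve gives $q+1\pm 2\sqrt{q}$, not the $q+2\pm 2\sqrt{q}$ of case (vii), an off-by-one you flag but do not resolve; and the exhaustiveness of (i)--(iii) for the remaining factorisation types ($(1,0)$- and $(2,0)$-factors, non-reduced forms, conjugate $(1,1)$-pairs without rational points) rests on the enumeration you postpone. In short: right skeleton, essentially the one the cited proof fleshes out, but the descent argument, the subconic structure theorem, the four-versus-six bounds, and the sharp constants are all open as submitted.
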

	
	If $\cC$ is reducible, the following holds.
	
	\begin{proposition}\label{conicherid}
		In $\mathbb{P}^2(\mathbb{F}_{q^2})$, $q$ odd, the intersection pattern of $\cH$ and a reducible conic $\cC$ is one of the following.
		\begin{itemize}
			\item[(viii)] if $\cC$ is a repeated line $\ell$, then either $|\cH \cap \cC|=1$ and $\ell$ is a tangent to $\cH$, or $|\cH \cap \cC|=q+1$ and $\ell$ is a secant to $\cH$;
			\item[(ix)] $|\cH \cap \cC|=2$ and $\cC$ is the product of two distinct tangents to $\cH$;
			\item[(x)]  $|\cH \cap \cC|\in \{q+1,q+2\}$ and $\cC$ is the product of a tangent to $\cH$ and a secant to $\cH$;
			
			\item[(xi)]  $|\cH \cap \cC|\in \{2q+1,2q+2\}$ and $\cC$ is the product of two distinct secants to $\cH$.
		\end{itemize}
	\end{proposition}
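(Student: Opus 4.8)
The plan is to reduce the whole statement to the fundamental incidence property of the Hermitian curve, namely that the $q^3+1$ points of $\cH$ form a Hermitian unital of $\mathbb{P}^2(\mathbb{F}_{q^2})$: every line of the plane meets $\cH$ in exactly one point (in which case it is a \emph{tangent}) or in exactly $q+1$ points (in which case it is a \emph{secant}), and through each point of $\cH$ there passes a unique tangent line. This is classical and can be read off directly from the equation $x^{q+1}=y^q+y$; I would simply recall it, with a reference to the literature on Hermitian curves, rather than reprove it. A reducible conic $\cC$ is, by definition, either a repeated line $\ell^2$ or a product $\ell_1\ell_2$ of two distinct lines, and in every case the underlying point set satisfies
\[
\cH\cap\cC=(\cH\cap\ell_1)\cup(\cH\cap\ell_2),
\]
with $\ell_1=\ell_2=\ell$ in the repeated case. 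So the strategy is to read off each $|\cH\cap\ell_i|$ from the unital property and then control the size of the overlap.

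For the repeated line (viii) there is nothing to overlap: $\cH\cap\cC=\cH\cap\ell$ has size $1$ or $q+1$ according to whether $\ell$ is tangent or secant. For two distinct lines the only possible common point of the two components is their intersection point $Q=\ell_1\cap\ell_2$, so inclusion--exclusion gives
\[
|\cH\cap\cC|=|\cH\cap\ell_1|+|\cH\cap\ell_2|-|\{Q\}\cap\cH|,
\]
and the three remaining cases follow by inserting the values $1$ and $q+1$ and deciding whether $Q\in\cH$. When both lines are secant (xi) this yields $2(q+1)-|\{Q\}\cap\cH|\in\{2q+1,2q+2\}$; when one is tangent and one secant (x) it yields $(q+2)-|\{Q\}\cap\cH|\in\{q+1,q+2\}$; when both are tangent (ix) I would argue that the two components share no point, so that the count is exactly $2$.

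The only delicate point, and the place where the uniqueness of tangent lines is really used, is the two-tangent case (ix): one must rule out that the tangency point of $\ell_1$ lies on $\ell_2$, which would otherwise drop the count. This follows because a point of $\cH$ lying on a tangent line must be its tangency point; hence if the tangency point $P_1$ of $\ell_1$ lay on $\ell_2$, it would coincide with the tangency point $P_2$ of $\ell_2$, making $\ell_1$ and $\ell_2$ two tangents at $P_1=P_2$ and therefore equal, a contradiction. Beyond this I would close the argument by checking exhaustiveness, since every reducible conic is a product of a tangent/secant pair and thus falls into exactly one of (viii)--(xi); and, if a full classification rather than merely a list of possibilities is wanted, by exhibiting for each listed value a concrete pair of lines realising it, which is immediate once one knows that $\cH$ admits both tangent and secant lines meeting in the required way.
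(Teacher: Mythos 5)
Your proof is correct and matches the paper, which in fact states this proposition \emph{without} any proof, treating it as an immediate consequence of the classical unital property of $\cH$ (every line of $\mathbb{P}^2(\mathbb{F}_{q^2})$ meets $\cH$ in $1$ or $q+1$ points, with a unique tangent at each point of $\cH$) --- precisely the inclusion--exclusion computation you carry out, including the uniqueness-of-tangents argument that settles case (ix). The only caveat, which you inherit from the statement itself rather than introduce, is the implicit assumption that the reducible conic splits into lines rational over $\mathbb{F}_{q^2}$: a product of two conjugate lines defined only over $\mathbb{F}_{q^4}$ meets $\cH$ in $0$ or $1$ points, a pattern neither the proposition nor your exhaustiveness check lists.
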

	
	We are now in position to prove the main result of this section. Its proof is based on the  observation that a codeword $c\in C$ associated to the evaluation of a degree $2$ polynomial defining a conic $\cC$ is minimal if and only if it does not exist another conic $\cC^\prime\neq \cC$ such that $\cH\cap \cC\subseteq \cH\cap \cC^\prime$.
	
	\begin{proposition}
		Let $q>7$ be odd. With the notations of Propositions \ref{conicheirrid} and \ref{conicherid}, the minimal codewords of the code $C$ arise from conics whose intersection pattern with $\cH$ is as in $(iv), (v), (vi), (vii), (xi)$.
	\end{proposition}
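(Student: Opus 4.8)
The plan is to convert minimality into a uniqueness statement about conics and then to run through the eleven patterns. Write $\cC$ for the conic attached to a codeword $c$ and set $S=\cH\cap\cC\cap\mathbb{A}^2(\mathbb{F}_{q^2})$. Since $\cH$ is absolutely irreducible of degree $q+1>2$, Bézout shows that no nonzero polynomial of degree at most $2$ vanishes on all $q^3$ affine points of $\cH$; hence the evaluation map is injective on conics, nonzero codewords correspond to conics up to a scalar, and linear independence of $c,c'$ corresponds to $\cC\neq\cC'$ as points of the space $\mathbb{P}^5$ of conics. Unwinding supports, $\mathrm{Supp}(c')\subseteq\mathrm{Supp}(c)$ is equivalent to $S\subseteq\cC'$, so, in agreement with the observation preceding the statement, $c$ is minimal if and only if $\cC$ is the unique conic through $S$. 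I would then reduce this to the linear system of conics through $S$: these form a projective subspace of $\mathbb{P}^5$ of dimension $5-\rho$, where $\rho$ is the number of independent conditions imposed by $S$, so $c$ is minimal precisely when $\rho=5$. Two elementary facts are used throughout: (a) at most $4$ points impose at most $4$ conditions, hence always lie in a pencil of conics and never give a minimal codeword; and (b) five points no three of which are collinear lie on a unique conic, while any conic meeting a line in at least $3$ points contains that line.

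For the irreducible patterns (i)--(vii) the points of $S$ lie on the irreducible conic $\cC$, so no three of them are collinear and by (b) it suffices to decide whether $|S|\ge 5$. Patterns (i), (ii), (iii) have $|S|\le 2$ and are non-minimal by (a). For (iv)--(vii) I would bound $|S|$ from below, using that the only point of $\cH$ at infinity is $P_\infty=[0:1:0]$, so $|S|$ is at least the projective intersection number minus $1$. This gives $|S|\ge q$ in (iv), $|S|\ge 2q-1$ in (v), and $|S|\ge q-1$ in (vi), all at least $5$ for $q>7$; in (vii) the projective count is at least $q-2\sqrt q+2$, whence $|S|\ge(\sqrt q-1)^2\ge 5$ once $q\ge 11$. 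In each of these cases (b) then forces $\cC$ to be the unique conic through $S$, so the codeword is minimal.

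For the reducible patterns I would use (b) in the ``contains the line'' direction. In (ix) and in the tangent subcase of (viii) one has $|S|\le 2$, hence non-minimality by (a); in the secant subcase of (viii) the $q+1$ collinear points of $S$ force every conic through $S$ to contain the secant, leaving a free second line and thus a whole pencil of such conics, so $c$ is not minimal. Pattern (x) is analogous: the $q+1$ collinear points impose the secant as a component, while the remaining tangency point fixes only one point of the second component, again producing infinitely many conics. The decisive case is (xi): $S$ meets each of two distinct secants $\ell_1,\ell_2$ in $q+1\ge 3$ points, so by (b) any conic through $S$ must contain both $\ell_1$ and $\ell_2$, forcing $\cC'=\ell_1\ell_2=\cC$; hence $\cC$ is unique and $c$ is minimal. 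Assembling the outcomes leaves exactly the patterns (iv), (v), (vi), (vii), (xi), as claimed.

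The step I expect to be most delicate is the passage from the projective intersection numbers of Propositions~\ref{conicheirrid}--\ref{conicherid} to the affine cardinality $|S|$ that actually governs the code, that is, deciding when $\cC$ passes through $P_\infty$. This is immaterial in (iv)--(vi) and in the reducible cases, where the slack is large, but it is tight in (vii): when $q=9$ the bound $q-2\sqrt q+2$ equals $5$, and discarding a point at infinity would leave only $4$ affine points, which no longer determine $\cC$. To close this I would either show that in the extremal configurations of (vii) the conic avoids $P_\infty$, so that $|S|\ge 5$ persists, or treat $q=9$ by hand; for all larger $q$ the inequality $(\sqrt q-1)^2\ge 5$ leaves room to spare and the difficulty disappears.
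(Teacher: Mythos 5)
Your proposal is correct in substance and follows the same skeleton as the paper's proof: minimality is reduced to the uniqueness of the degree-$\le 2$ curve through the common zero set $S$, and the classification is then read off from the intersection patterns via Bézout-type facts about conics. The implementation differs in ways that are mostly to your credit. Where the paper dismisses cases $(viii)$, $(ix)$, $(x)$ by exhibiting ``two properly chosen secants'' whose intersection with $\cH$ contains the given pattern, you instead exhibit explicit pencils of conics through $S$ via the conditions-imposed count in $\mathbb{P}^5$ --- equivalent, but more systematic, and it automatically handles covering by lines and constants as degenerate members of $\mathbb{P}^5$. Where the paper proves minimality in $(iv)$--$(vii)$ by the direct contradiction $|\cC\cap\cC'|\le 4<|\cH\cap\cC|$, you use the equivalent classical fact that five points, no three collinear, lie on a unique conic. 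Most notably, you actually prove minimality in case $(xi)$ (any conic through at least three affine points on each of the two secants must contain both lines, hence equals $\ell_1\ell_2$); the paper asserts that $(xi)$ yields minimal codewords but never argues it, and its Bézout contradiction does not transfer verbatim there since a covering conic could share a line with the reducible $\cC$.

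The one genuine loose end is the subcase of $(vii)$ with $q=9$, which you flag but do not close, and which is in scope since $q>7$ admits $q=9$. There the projective bound $q-2\sqrt q+2=5$ leaves only $|S|\ge 4$ after discarding a possible point at infinity, and four points always lie on a pencil, so your argument --- and indeed the minimality claim itself --- requires ruling out an irreducible conic through $P_\infty=[0:1:0]$ meeting $\cH$ in exactly five projective points. Be aware that the paper does not address this either: its proof applies Bézout to the projective set $\cH\cap\cC$ of Proposition \ref{conicheirrid}, whereas support containment in the code (which is evaluated only at affine points) controls just the affine rational part of the intersection; the paper is silently identifying the two counts, and your analysis is precisely what exposes the discrepancy. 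As written, your proposal proves the proposition for all odd $q\ge 11$; to cover the stated range you must carry out one of your two suggested remedies at $q=9$, e.g.\ show that the extremal configurations of $(vii)$ avoid $P_\infty$, or verify that value of $q$ directly.
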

	\proof
	Among the reducible cases, minimal codewords can only arise from conics that are the product of two distinct secants to $\cH$ (case $(xi)$). Indeed, the intersection patterns of cases $(viii),(ix),(x)$ are strictly contained in the intersection of $\cH$ with two distinct (properly chosen) secant lines, and hence, by the above mentioned observation, the corresponding codewords are not minimal. To prove that a conic $\cC$ as in $(iv), (v), (vi), (vii)$ corresponds to a minimal codeword, assume by way of contradiction that there exists a conic $\cC^\prime$ such that $\cH\cap \cC\subseteq \cH\cap \cC^\prime$. Then $\cC\cap \cC^\prime$ contains $\cH\cap \cC$. However, as $|\cH\cap \cC|>4$ holds in each of the cases $(iv), (v), (vi), (vii)$ for $q>7$, this is a contradiction with the B\'ezout's Theorem stating that $|\cC\cap\cC^\prime|\leq 4$. Finally, it is readily seen that cases $(i),(ii),(iii)$ don't correspond to minimal codewords.
	\endproof

	From now on in this section we assume $k>3$, and we give a description of the minimal codewords of the code $C_{q,r,k}$. 
	
	\begin{proposition}
		Let $k< \#\mathcal{N}_{q,r}(\F_{q^r})$.
		
		The minimal codewords of $C_{q,r,k}$ are the ones generated by the evaluations of polynomials of the shape
		\begin{itemize}
			\item[(i)] $y-f(x)$, $\deg(f)= \overline{k}$, with
			$$q^{r-1}-(\max\{\overline{k},r\}-1)(\max\{\overline{k},r\}-2)q^{r-3/2}+5\max\{\overline{k},r\}^{13/3}q^{r-2}>k,$$ and 
			
			$$\overline{k}>r\textrm{ and }p\nmid \overline{k}, \quad \textrm{ or } \overline{k}=r\geq 4, \quad \textrm{ or } 0<\overline{k}<r;$$
			
			\item[(ii)] $g(x)$, where $g(x)$ is a polynomial of degree $k$ having all distinct roots in $\mathbb{F}_{q^r}$;
			\item[(iii)] $y-\alpha$, with $\alpha\in \mathbb{F}_{q^r}$.
		\end{itemize}

	\end{proposition}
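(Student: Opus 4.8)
The plan is to translate minimality into a statement about planar intersections and then treat separately the two shapes a defining polynomial can take: the \emph{graph} curves $\mathcal{X}\colon y=f(x)$ (the case $b\neq 0$, normalised to $b=1$) and the \emph{vertical} curves $\mathcal{X}\colon g(x)=0$ (the case $b=0$). Writing $\mathcal{X}$ for the curve attached to $c=\mathrm{ev}(by-f)$, the support of $c$ is the set of affine points of $\mathcal{N}_{q,r}$ off $\mathcal{X}$, so $\mathrm{Supp}(c')\subseteq\mathrm{Supp}(c)$ is equivalent to $\mathcal{N}_{q,r}\cap\mathcal{X}\subseteq\mathcal{N}_{q,r}\cap\mathcal{X}'$. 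Hence $c$ is minimal if and only if there is no curve $\mathcal{X}'$ of the family, defining a non-proportional codeword, with $\mathcal{N}_{q,r}\cap\mathcal{X}\subseteq\mathcal{X}'$. Two elementary facts drive everything: (a) a graph meets any other graph or any vertical curve in at most $k$ affine points, all with distinct $x$-coordinates (two graphs agree where $f=f'$; a vertical curve $g'(x)=0$ meets $y=f(x)$ at the rational roots of $g'$); and (b) over a rational value $\rho$ the line $x=\rho$ meets $\mathcal{N}_{q,r}$ in the full fibre $\{\,y:\mathrm{T}(y)=\mathrm{N}(\rho)\,\}$ of size $q^{r-1}$.

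For the graph case I would prove the clean dichotomy: $c=\mathrm{ev}(y-f)$ is minimal if and only if $|\mathcal{N}_{q,r}\cap\mathcal{X}|>k$. Sufficiency is immediate from (a), since any competing $\mathcal{X}'$ shares at most $k$ points with $\mathcal{X}$ and so cannot contain the larger set $\mathcal{N}_{q,r}\cap\mathcal{X}$. For necessity, if $|\mathcal{N}_{q,r}\cap\mathcal{X}|\le k$ then its points have at most $k$ distinct $x$-coordinates $x_i$, and the vertical curve $g'(x)=\prod(x-x_i)$ (of degree $\le k$) passes through all of them, giving a non-proportional codeword covered by $c$. It then remains to compute the intersection number. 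For $\deg f=\overline{k}\ge 1$ this number equals $\#\mathcal{S}(\F_q)$, and Proposition~\ref{Prop:General}, under the listed conditions on $(\overline{k},r)$ and $p$, yields precisely the displayed lower bound; requiring it to exceed $k$ is the content of case (i). For $\overline{k}=0$, i.e. $\mathcal{X}\colon y=\alpha$, the intersection is the norm-fibre $\{x:\mathrm{N}(x)=\mathrm{T}(\alpha)\}$, which has $(q^r-1)/(q-1)>k$ points when $\mathrm{T}(\alpha)\neq0$ (case (iii)) but only the single point $(0,\alpha)$ when $\mathrm{T}(\alpha)=0$; the latter is then covered, e.g. by $y=\alpha+x$, so it is \emph{not} minimal. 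This degenerate subcase is the one place where the literal statement of (iii) needs the proviso $\mathrm{T}(\alpha)\neq0$.

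For the vertical case $\mathcal{X}\colon g(x)=0$, fact (b) gives $|\mathcal{N}_{q,r}\cap\mathcal{X}|=m\,q^{r-1}$, where $m$ is the number of distinct roots of $g$ lying in $\F_{q^r}$. Such an intersection carries $q^{r-1}>1$ points over each of the $m$ rational roots, so by (a) it can never be contained in a graph; thus only vertical competitors $\mathcal{X}'\colon g'(x)=0$ matter, and $\mathcal{N}_{q,r}\cap\mathcal{X}\subseteq\mathcal{X}'$ holds exactly when every rational root of $g$ is a root of $g'$. If $g$ is squarefree, totally split over $\F_{q^r}$ and of degree exactly $k$, then $m=k$ and any such $g'$ of degree $\le k$ is a scalar multiple of $g$, so $c$ is minimal — this is case (ii). Otherwise (degree $<k$, or a repeated or non-rational root, so $m<k$) one produces a non-proportional cover: enlarging $g$ by a further linear factor when $\deg g<k$, or replacing $g$ by the product of the distinct $(x-\rho_j)$ over its rational roots when $g$ is not already of that form; either way the resulting codeword has support contained in (indeed equal to, in the second case) that of $c$, so $c$ is not minimal.

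The routine ingredients are the fibre count in (b) and the Bézout-type bound in (a); the genuine obstacle is the quantitative step for graphs with $\overline{k}\ge1$, namely guaranteeing $|\mathcal{N}_{q,r}\cap\mathcal{X}|>k$. This rests entirely on the absolute irreducibility of $\mathcal{S}$ together with the Lang--Weil/Cafure--Matera estimate, i.e. on Proposition~\ref{Prop:General}, which is why the classification is phrased only in the regimes covered there. Correspondingly, the direction ``the three families \emph{are} minimal'' is robust, whereas the exactness of the converse is most delicate precisely in the boundary cases left open upstream (such as $\overline{k}=r=3$, or $p\mid\overline{k}$ with $\overline{k}>r$), where the intersection number — the decisive invariant throughout — is not controlled.
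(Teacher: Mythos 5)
Your proposal is correct and follows essentially the same route as the paper: you split codewords into graphs $y=f(x)$ and verticals $g(x)=0$, use the trace-fibre count $q^{r-1}$ over each rational $x$-coordinate together with B\'ezout-type degree bounds to rule out cross-coverings, and invoke Proposition~\ref{Prop:General} (absolute irreducibility of $\mathcal{S}$ plus the Cafure--Matera estimate) as the single quantitative input for case (i); your packaging of the graph case as the clean dichotomy ``$\mathrm{ev}(y-f)$ is minimal if and only if $|\mathcal{N}_{q,r}\cap\mathcal{X}\cap\mathbb{A}^2(\F_{q^r})|>k$'' is a tidier formulation of what the paper does case by case, but it is mathematically the same argument. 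One point deserves emphasis: your proviso $\mathrm{T}(\alpha)\neq 0$ in case (iii) is a genuine correction to the statement, not a redundancy. If $\mathrm{T}(\alpha)=0$, the only zero of $\mathrm{ev}(y-\alpha)$ is the single point $(0,\alpha)$, since $\mathrm{N}(x)=0$ forces $x=0$; that point is also a zero of the non-proportional codeword $\mathrm{ev}(y-x-\alpha)$ (as $\mathrm{T}(x+\alpha)=\mathrm{T}(x)$ vanishes at $x=0$), so $\mathrm{ev}(y-\alpha)$ covers a non-proportional codeword and is not minimal. The paper's proof of (iii) tacitly assumes the fibre $\{x:\mathrm{N}(x)=\mathrm{T}(\alpha)\}$ has $(q^r-1)/(q-1)>k$ points, which fails exactly when $\mathrm{T}(\alpha)=0$; your proof correctly isolates and handles this degenerate subcase.
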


	\begin{proof}
		Consider two codewords $c,c^\prime\in C_{q,r,k}$.
		Recall that the codewords of $ C_{q,r,k}$ are the evaluation of polynomials in the span of the set $\{y,x^{i}\}_{i=0,\dots,k}$ at the  $\F_{q^r}$-rational points of $\mathcal{N}_{q,r}$. Let $F(x,y)$ and $F^\prime(x,y)$ be the polynomials that correspond to $c$ and $c^\prime$, respectively.
		First, we assume $F(x,y)=f(x)$ and $F^\prime(x,y)=y-g(x)$. Then, we claim that the support of $c=\mathrm{ev}(f(x))$ doesn't contain the support of $c^\prime=\mathrm{ev}(y-g(x))$. Indeed, write $f(x)=\prod_{i=1}^{\deg(f)} (x-t_i)$, with $t_i\in \overline{\mathbb{F}}_q$. Then, the zeros of $c$ correspond to all the affine points of $\mathcal{N}_{q,r}$ with coordinates $(t_i,y_i^{(j)})$ such that $t_i\in \mathbb{F}_{q^r}$ and $$\text{N}_{\F_q}^{\F_{q^r}}(t_i)=\text{T}_{\F_q}^{\F_{q^r}}(y_i^{(j)}),$$ for $j=1,\ldots,q^{r-1}$.
		Observe that if $t_i\not\in\mathbb{F}_{q^r}$ for every $i\in\{1,\ldots,\deg(f)\}$, then $c$ is a full-weight codeword and hence it is not minimal. 
		Also, for each $t_i\in \mathbb{F}_{q^r}$, there exists at most a unique $\bar{y}_i$ such that $\bar{y}_i=g(t_i)$ and $(t_i,\bar{y}_i)$ belongs to $\mathcal{N}_{q,r}$. Therefore, the support of $c$ cannot contain the support of $c^{\prime}$.
		
		On the other hand, it is readily seen that if $\deg(g)=\bar{k}>0$, and $\bar{k}$ is as in (i), then the support of $c$ cannot be contained in the support of $c^{\prime}$. Indeed, Proposition \ref{Prop:General} together with the assumption $q^{r-1}-(\max\{\overline{k},r\}-1)(\max\{\overline{k},r\}-2)q^{r-3/2}+5\max\{\overline{k},r\}^{13/3}q^{r-2}>k$, show that the zeros of $c$ cannot contain the zeros of $c^{\prime}$. 
		
		Now, we deal with the case $F(x,y)=y-f(x)$ and $F^\prime(x,y)=y-g(x)$, with $f(x)\neq g(x)$.
		Suppose that the zeros of  $c=\mathrm{ev}(y-f(x))$ are also zeros of $c^\prime=\mathrm{ev}(y-g(x))$, $f(x)\neq g(x)$. Thus, they are also zeros of $\hat{c}=c-c^{\prime}=\mathrm{ev}(f(x)-g(x))$. Then, the argument above applied to $c$ and $\hat{c}$ shows that this case is not possible. 
		
		Assume now that $F(x,y)=f(x)$ and $F^\prime(x,y)=g(x)$, and denote by $\{t_1,\ldots,t_h\}$ and  $\{u_1,\ldots,u_l\} $ the zeros of $f$ and $g$ in $\mathbb{F}_{q^r}$, respectively. Then, it is readily seen that the support of $c=\mathrm{ev}(f(x))$ contains the support of  $c^\prime=\mathrm{ev}(g(x))$ if and only if $\{t_1,\ldots,t_h\}\subset \{u_1,\ldots,u_l\} $. As a direct consequence, the minimal codewords arising from a polynomial of type $f(x)$ must be as in Case (ii).
		
		Finally, let $F(x,y)=y-\alpha$ for a certain $\alpha\in\mathbb{F}_{q^r}$. Then, by the above mentioned arguments, together with the fact that the support of $c$ cannot contain the support of a codeword arising from a polynomial of type $y-\beta$, with $\beta\neq\alpha$, we have that in this case $c$ is minimal.
		
	\end{proof}

	\section*{Acknowledgments}
	This research was supported by the Italian National Group for Algebraic and Geometric Structures and their Applications (GNSAGA - INdAM). The third author is funded by the project ``Metodi matematici per la firma digitale ed il cloud computing" (Programma Operativo Nazionale (PON) “Ricerca e Innovazione” 2014-2020, University of Perugia).
	
	\bibliographystyle{plain}
	\bibliography{biblio.bib}

\end{document}